\tikzstyle{vertex}=[circle,draw, inner sep=0pt, minimum size=1.5pt]
\newtheorem{theorem}{Theorem}[section]
\newtheorem{lemma}[theorem]{Lemma}
\newtheorem{question}[theorem]{Question}
\newtheorem{conjecture}[theorem]{Conjecture}
\newtheorem{corollary}[theorem]{Corollary}
\theoremstyle{definition}
\newtheorem{definition}[theorem]{Definition}
\newtheorem{example}[theorem]{Example}
\newcommand\Hilb{\operatorname{Hilb}}
\newcommand\cdeg{\operatorname{cdeg}}
\newtheorem{remark}[theorem]{Remark}
\newcommand{\pd}{\operatorname{pd}}
\newcommand{\iv}{\operatorname{iv}}
\newcommand{\qc}{\operatorname{qc}}
\newcommand{\reg}{\mathrm{reg}}
\begin{document}
	
	\title[Binomial edge ideals and bounds for their regularity]{Binomial edge ideals and bounds for their regularity}
	
	\author[Arvind Kumar]{Arvind Kumar}
	\email{arvkumar11@gmail.com}
	\address{Department of Mathematics, Indian Institute of Technology Madras, Chennai, 600036, India}

	\begin{abstract}
	Let $G$ be a simple graph on $n$ vertices and $J_G$ denote the
	corresponding binomial edge ideal in $S = K[x_1, \ldots, x_n, y_1,
	\ldots, y_n].$ We prove  that the Castelnuovo-Mumford regularity of $J_G$
	is bounded above by $c(G)+1$, when $G$ is  a  quasi-block graph or semi-block graph. We give another proof of  Saeedi Madani-Kiani regularity upper bound
	conjecture for chordal graphs. We obtain the regularity of binomial edge ideals of  Jahangir graphs. Later, we establish a sufficient condition for Hibi-Matsuda conjecture to be true.

	\end{abstract}
	\keywords{Binomial edge ideal, Castelnuovo-Mumford regularity, Chordal graph, Quasi-block graph, Semi-block graph, h-polynomial}
	\thanks{Mathematics Subject Classification: 13D02, 05E40}
	\maketitle
\section{Introduction}
 Let $G$ be a simple graph on $[n]$ and
 $S=K[x_1, \ldots, x_n,y_1, \ldots, y_n]$, where $K$ is a field.  The
 binomial edge ideal of the graph $G$, $J_G =(x_i y_j - x_j y_i :
 \{i,j\} \in E(G), \; i <j)$, was  introduced by Herzog et al. in \cite{HH1}
and independently by Ohtani in \cite{oh}. Since then researchers have been 
trying to study the algebraic invariants of $J_G$ in terms of  the combinatorial
invariants of $G$. In  \cite{her1,HH1,JNR,KM3,MM,Rauf,KM1,KM2}, the authors
have  established connections between homological invariants such as depth,
codimension, Betti numbers and Castelnuovo-Mumford regularity of $J_G$
with certain combinatorial invariants associated with the graph $G$. The study of
Castelnuovo-Mumford regularity of binomial edge ideals has attracted a lot of 
attention in the recent past due to its algebraic and geometric importance.  In \cite[Theorem
1.1]{MM}, Matsuda and Murai proved that for any graph $G$ on $[n]$, $l(G) \leq \reg(S/J_G) \leq n-1$, where $l(G)$ is the length of a
longest induced path in $G$. In the same article, they conjectured that $\reg(S/J_G) = n-1$
if and only if $G=P_n$. This conjecture was settled in
affirmative by Kiani and Saeedi Madani in \cite{KM3}. For a graph $G$,
let $c(G)$ denote the number of maximal cliques in $G$. If $G$ is a
closed graph, i.e., the generators of $J_G$ are a Gr\"obner basis  with respect to lexicographic order induced by $x_1 > \cdots >x_n>y_1>\cdots >y_n$, then
Saeedi Madani and Kiani \cite{KM1} proved that $\reg(S/J_G) \leq c(G)$.  In \cite{KM2}, the following conjecture was proposed.
\begin{conjecture}\label{con}
	Let $G$ be a graph on $[n]$. Then, $\reg(S/J_G) \leq c(G)$.
\end{conjecture}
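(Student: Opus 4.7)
The natural strategy is induction driven by Ohtani's primary-like decomposition. For any non-isolated vertex $v$, let $G_v$ denote the graph obtained from $G$ by making $N_G(v)$ a clique. Then
$$J_G = J_{G_v} \cap \bigl((x_v,y_v) + J_{G\setminus v}\bigr),$$
and the induced Mayer--Vietoris short exact sequence gives
$$\reg(S/J_G) \leq \max\bigl\{\reg(S/J_{G_v}),\ \reg(S/J_{G\setminus v}),\ \reg(S/J_{G_v\setminus v}) + 1\bigr\}.$$
To run an induction, I would order graphs on $[n]$ by $n$ and, at fixed $n$, by reverse inclusion of edge sets (noting that $G_v$ strictly contains $G$ whenever $v$ has two non-adjacent neighbors, and the iteration terminates at a closed graph). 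The base case of closed graphs is Saeedi Madani--Kiani's theorem. The goal is then to exhibit, at each step, a vertex $v$ with $c(G_v) \leq c(G)$ and $c(G_v \setminus v) \leq c(G) - 1$; if these inequalities hold, the induction closes immediately.

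\textbf{The chordal case.} When $G$ is chordal, I take $v$ to be a simplicial vertex lying at a leaf of the clique tree of $G$. Since $N_G(v)$ is already a clique we have $G_v = G$, so the Ohtani sequence degenerates, and instead I use the analogous decomposition $J_G = J_{G\setminus v} \cap Q$, with $Q$ a prime component associated to $v$ and its unique maximal clique $F$. This reduces the estimate to $\reg(S/J_{G\setminus v})$ and $\reg(S/(Q + J_{G\setminus v}))$. For such a leaf simplicial $v$ one has $c(G\setminus v) = c(G) - 1$, and the complete-graph factor contributes the extra $+1$, so the ordinary induction on $n$ closes, recovering Saeedi Madani--Kiani's bound for chordal graphs.

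\textbf{Main obstacle.} The hard part will be the non-chordal case, where $G$ may possess no simplicial vertex at all and completing $N_G(v)$ genuinely introduces new edges. Even though the inductive order on edge sets terminates at a closed graph, $c(G_v)$ can exceed $c(G)$ by an unbounded amount: in a long induced cycle, for instance, one completion fuses a path of length two into a triangle while simultaneously destroying adjacent edge-cliques and, for higher-degree vertices, spawning many new maximal cliques of varying sizes. Consequently the bound $\reg(S/J_{G_v}) \leq c(G_v)$ delivered by the induction is in general too weak to imply $\reg(S/J_G) \leq c(G)$. The crux is therefore to show that one can always choose a vertex $v$ whose completion does not inflate the clique count, or, failing that, to track an auxiliary invariant (for example, the number of minimal separators that are not themselves cliques) whose strict decrease compensates for the loss of cliques. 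The partial results obtained in the paper for quasi-block and semi-block graphs --- classes in which the block structure forces a tight control on how $c$ changes under local completion --- suggest this as the right intermediate step toward the full conjecture, but bridging from block-structured graphs to arbitrary $G$ appears to require essentially new input.
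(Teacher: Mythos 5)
The first thing to say is that the statement you are addressing is Conjecture \ref{con} itself, which is open: the paper does not prove it for arbitrary graphs, only for quasi-block graphs (Theorem \ref{quasi-block}), semi-block graphs (Theorem \ref{semi-reg}), and --- reproving a result of Rouzbahani Malayeri et al.~--- chordal graphs (Theorem \ref{chordal-reg}). Your proposal likewise does not prove the statement: the paragraph you label ``Main obstacle'' is exactly where the open problem lives, and nothing in your plan resolves it, so the gap is genuine and is in fact the entire content of the conjecture. That said, the machinery you set up is precisely the machinery the paper uses in its special cases: Ohtani's decomposition $J_G=J_{G_v}\cap((x_v,y_v)+J_{G\setminus v})$ for an internal vertex $v$ (Lemma \ref{ohlemma}), the resulting short exact sequence \eqref{ses1}, the regularity estimate of Lemma \ref{regularity-lemma}, and an induction whose whole difficulty is to control $c(G_v)$ and $c(G_v\setminus v)$. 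Your diagnosis is accurate: for chordal graphs the needed control is Lemma \ref{l} (equivalently Corollary \ref{chordal-graph}, $c(G_v)<c(G)$ for $v$ internal), and for quasi-block and semi-block graphs the paper extracts the analogous control from the block structure together with the decrease of $\iv$ under $G\mapsto G_v$ and $G\mapsto G\setminus v$ (Lemma \ref{ivlemma}). No such control is known for general graphs, which is why the conjecture remains a conjecture; your suggestion of a compensating auxiliary invariant is plausible but is not carried out.

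One concrete error in your chordal sketch: for a simplicial vertex $v$ you propose the decomposition $J_G=J_{G\setminus v}\cap Q$. This cannot hold, because $J_{G\setminus v}\subsetneq J_G$ whenever $v$ is non-isolated (its generators are a proper subset of those of $J_G$), whereas any intersection $J_G=A\cap B$ forces $J_G\subseteq A$. The paper's chordal argument avoids this by choosing $v$ to be an \emph{internal} vertex, so that Lemma \ref{ohlemma} applies nontrivially, and by inducting on $\iv(G)$ rather than on $n$; the inequalities $c(G_v)<c(G)$ and $c(G\setminus v)\leq c(G)$ then close the induction via \eqref{ses1}. If you want to salvage a simplicial-vertex reduction, you would need a different exact sequence (in the spirit of the block-graph arguments of Ene--Zarojanu), not an intersection of $J_G$ with $J_{G\setminus v}$.
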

In \cite{EZ},
Ene and Zarojanu proved the conjecture for block graphs. In \cite{JACM}, 
Jayanthan and Kumar proved the conjecture for  $k$-fan graph of the  complete graph. In \cite{MKM2018}, Rouzbahani Malayeri et al. proved the conjecture for the class of
chordal graphs. Recently, in \cite{KK19}, Kahle and Kr\"usemann proved the conjecture for cographs.   In the third section, we prove Saeedi Madani-Kiani conjecture for some classes of non-chordal graphs. We prove Conjecture \ref{con} for the class of quasi-block graphs (see Sect. $3$ for the definition).  Indeed, we give an example of a quasi-block graph to show that the  upper bound is tight. Then, we prove Conjecture \ref{con} for semi-block graphs (see Sect. $3$ for the definition).  We then give another proof of Conjecture \ref{con} for the class of chordal graphs. Also, we provide
a sufficient condition for chordal graphs so that the inequality is strict in Conjecture \ref{con}.

In the fourth section,  we obtain the regularity of binomial edge ideals of  Jahangir graphs (see Sect. $4$ for the definition). To compute the regularity of binomial edge ideals of Jahangir graphs, we use \cite[Theorem 2.1]{ERT19}, which is a recent result due to Ene, Rinaldo and Terai.

In \cite{HibiM2018}, Hibi and Matsuda studied the regularity of binomial edge ideals of graphs from  the
algebraic perspective and conjectured  that the regularity is bounded above by the degree of the $h$-polynomial of $S/J_G$.
\begin{conjecture}\cite[Conjecture 0.1]{HibiM2018}\label{Hibi-con}
Let $G$ be a graph on $[n]$. Then, $\reg(S/J_G)\leq \deg h_{S/J_G}(t)$.
\end{conjecture}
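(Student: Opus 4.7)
The plan is to aim for a sufficient combinatorial condition for the inequality, rather than the full conjecture, since the latter appears to lie well beyond current techniques. The starting point is the classical fact that for a standard graded Cohen--Macaulay $K$-algebra $R$ one has $\reg R = \deg h_R(t)$; thus the conjecture is automatic whenever $S/J_G$ is Cohen--Macaulay, and the real issue is controlling the gap between $\reg(S/J_G)$ and $\deg h_{S/J_G}(t)$ when Cohen--Macaulayness fails.

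My approach is to combine two ingredients. On the regularity side, use the upper bound $\reg(S/J_G)\le c(G)$ from Conjecture \ref{con}, now established for chordal graphs \cite{MKM2018}, block graphs \cite{EZ}, cographs \cite{KK19}, and (by the earlier sections of the present paper) for quasi-block and semi-block graphs. On the $h$-polynomial side, I would seek a combinatorial lower bound of the form $c(G)\le\deg h_{S/J_G}(t)$. Together these yield $\reg(S/J_G)\le c(G)\le\deg h_{S/J_G}(t)$, which verifies Conjecture \ref{Hibi-con} for every graph in the intersection of the two hypotheses; thus the problem reduces to proving the $h$-polynomial lower bound.

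To obtain this lower bound, I would compute $h_{S/J_G}(t)$ via the Herzog--Hibi--Hreinsd\'ottir--Kahle--Rauh primary decomposition $J_G = \bigcap_T P_T(G)$. Since each quotient $S/P_T(G)$ has an explicitly known Hilbert series (a product of polynomial rings and maximal minor ideals), Mayer--Vietoris sequences together with inclusion--exclusion give a recursive formula for $h_{S/J_G}(t)$. I would then try to identify a distinguished maximal clique, or a cut vertex inducing such a decomposition, that contributes a top-degree term surviving all cancellations and of degree at least $c(G)$. A secondary source of structural control is the recent theorem of Ene--Rinaldo--Terai \cite[Theorem 2.1]{ERT19}, cited later in the paper, which should make some of these degree extractions tractable in a class-by-class fashion.

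The main obstacle will be the cancellations in the inclusion--exclusion: the alternating sum over subsets $T\subseteq[n]$ can delete exactly the top coefficients of $h_{S/J_G}(t)$ one would like to preserve. Pinning down a structural feature, perhaps a maximum Cohen--Macaulay subconfiguration, or the existence of a free vertex whose removal behaves predictably on both sides, that certifies survival of a sufficiently high-degree monomial is the crux, and is presumably the reason any proof is forced to restrict to a specific combinatorial class rather than treat arbitrary $G$.
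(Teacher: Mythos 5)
The statement you are addressing is a conjecture, and the paper does not prove it; on the contrary, its final section exhibits a $13$-vertex block graph $G$ with $\reg(S/J_G)=6$ and $\deg h_{S/J_G}(t)=5$, so Conjecture \ref{Hibi-con} is false in general, even for chordal graphs. Your hedge of only seeking a sufficient condition is therefore essential, but the specific condition you propose also fails: the inequality $c(G)\leq \deg h_{S/J_G}(t)$ on which your whole chain $\reg(S/J_G)\leq c(G)\leq\deg h_{S/J_G}(t)$ rests is refuted by that very counterexample, which has $c(G)=7$ maximal cliques but $\deg h_{S/J_G}(t)=5$. In general $c(G)$ can be made much larger than $\deg h_{S/J_G}(t)$ (attach many pendant edges or triangles at a single vertex), so no amount of care with the inclusion--exclusion over the primary decomposition $J_G=\bigcap_T P_T(G)$ will produce a surviving term of degree at least $c(G)$; the ``crux'' you identify is asking for something that is simply not there.

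The paper's actual sufficient condition is of a different nature: it assumes that $S/J_G$ has a unique extremal Betti number $\beta_{p,p+r}^S(S/J_G)$, where $p=\pd(S/J_G)$ and $r=\reg(S/J_G)$. From the identity $\sum_{i,j}(-1)^i\beta_{i,j}^S(S/J_G)\,t^j=h_{S/J_G}(t)(1-t)^{2n-d}$ with $d=\dim(S/J_G)$, uniqueness of the extremal Betti number forces $p+r=2n-d+\deg h_{S/J_G}(t)$, and the bounds $p\geq n-1$ (via the vertex-connectivity lower bound on projective dimension) and $d\geq n+1$ (from the primary decomposition) then give $r\leq\deg h_{S/J_G}(t)$. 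If you want a combinatorial class on which the conjecture holds, the productive question is which graphs admit a unique extremal Betti number, not how $c(G)$ compares with $\deg h_{S/J_G}(t)$.
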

In the last section, we obtain a sufficient condition for Conjecture
\ref{Hibi-con} to be true. Recently, in \cite{KK19}, Kahle and
Kr\"usemann found a counterexample to Conjecture \ref{Hibi-con}. We
give another example to show that
Conjecture \ref{Hibi-con} is not true in general, even for chordal graphs. 
\section{Preliminaries}
In this section, we  recall  some notation and fundamental results on
graphs and the corresponding binomial edge ideals which are used
throughout this paper.

Let $G$  be a  finite simple graph with vertex set $V(G)$ and edge set
$E(G)$. For $A \subseteq V(G)$, $G[A]$ denotes the \textit{induced
subgraph} of $G$ on the vertex set $A$, i.e., for $i, j \in A$, $\{i,j\} \in E(G[A])$ if and only if $ \{i,j\} \in E(G)$. 
For a vertex $v$, $G \setminus v$ denotes the  induced subgraph of $G$
on the vertex set $V(G) \setminus \{v\}$. A vertex $v \in V(G)$ is
said to be a \textit{cut vertex} if $G \setminus v$ has  more
connected components than $G$.
A subset $U$ of $V(G)$ is said to be a 
\textit{clique} if $G[U]$ is a complete graph. For $v \in V(G)$, let $\cdeg_G(v)$ denote the number of maximal cliques which contains $v$.   We say that $G$ is \textit{k-vertex connected}, if $k <|V(G)|$ and for every subset $S \subset V(G)$ such that $|S| < k$, the induced subgraph $G[V(G)\setminus{S}]$ is connected. The \textit{vertex connectivity} of $G$, denoted by $\kappa(G)$, is defined as the maximum integer $k$ such that G is $k$-vertex connected.

A \textit{simplicial complex} $\Delta$ on the vertex set $[n]$ is a collection of subsets of $[n]$ such that:
$(i)$ $\{v\} \in \Delta$  for all $v \in [n]$; $(ii)$ $F \in \Delta$ and $G\subseteq F$ implies $G \in \Delta$.
An element $F \in \Delta$ is called a \textit{face} of $\Delta$. A maximal face of $\Delta$  with respect to inclusion is called a \textit{facet} of $\Delta$.
A facet $F$ of $\Delta$ is called a {\em leaf}, if either $F$ is the only facet, or else there exists a 
facet $G$ such that for each facet $H$ of $\Delta$ with $H\neq F$, $H\cap F\subsetneq G\cap F$. A vertex $v$ is said to be a \textit{free vertex  (simplicial vertex)} if it belongs to
exactly one facet of $\Delta(G)$. Each leaf $F$ has at least one  free vertex. 

The simplicial complex $\Delta$ is  called a {\em quasi-forest}, if its facets can be ordered $F_1,\ldots,F_s$
such that for all $i>1$, the facet $F_i$ is a leaf of the simplicial complex with facets $F_1,\ldots, F_{i-1}$.
Such an order of the facets is called a {\em leaf order}. A connected quasi-forest is called a {\em quasi-tree}.
The collection of all cliques of a  graph $G$ form a simplicial complex which is called  {\em clique complex } of $G$ and is denoted by $\Delta(G).$  Its facets are the maximal cliques of $G.$ The \textit{clique number}
of a graph ${G}$, denoted by $\omega({G})$, is the maximum size of the maximal cliques of ${G}$.

  A vertex $v$ is said to be an \textit{internal vertex}, if $v$ is not a free vertex. Let $\iv(G)$ denote the number of 
  internal vertices of $G$. The neighborhood of a vertex $v$, denoted by $N_G(v),$ is defined as $\{u \in V(G) 
  :  \{u,v\} \in E(G)\}$.  For a vertex $v$,  $G_v$ is the
graph on the vertex set $V(G)$ and edge set $E(G_v) =E(G) \cup \{
\{u,w\}: u,w \in N_G(v)\}$. For $e \in E(G)$, $G\setminus e$ is 
the graph on the vertex set $V(G)$ and edge set $E(G) \setminus \{e\}$. 

For  $T \subset [n]$, let $\bar{T} = [n]\setminus T$ and $c_G(T)$
denote the number of connected components of $G[\bar{T}]$. Let $G_1,\ldots,G_{c_{G}(T)}$ be the connected 
components of $G[\bar{T}]$. For each $i$, let $\tilde{G_i}$ denote the complete graph on $V(G_i)$. Set
$P_T(G) = (\underset{i\in T} \cup \{x_i,y_i\}, J_{\tilde{G_1}},\ldots, J_{\tilde{G}_{c_G(T)}}).$
In \cite{HH1}, it was shown by Herzog et al.  that $J_G =  \underset{T \subseteq [n]}\cap P_T(G).$

The following basic property of   regularity is used repeatedly in this
article. We refer the reader to the book \cite[Chapter 18]{Peeva} for more properties on regularity.

\begin{lemma}\label{regularity-lemma}
	Let ${M},{N}$ and ${P}$ be finitely generated graded ${S}$-modules. 
	If $$ 0 \rightarrow {M} \xrightarrow{f}  {N} \xrightarrow{g} {P} \rightarrow 0$$ is a 
	short exact sequence with $f,g$  
	graded homomorphisms of degree zero, then 
	\begin{enumerate}
		\item $\reg({M}) \leq \max\{\reg({N}),\reg({P})+1\}$.
		\item $\reg({M}) = \reg({N})$, if $\reg({N}) >\reg({P})$. 
	\end{enumerate}
\end{lemma}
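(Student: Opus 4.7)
The plan is to reduce both assertions to the long exact sequence of $\mathrm{Tor}$ obtained from the given short exact sequence, together with the Betti-number characterization of Castelnuovo-Mumford regularity,
$$\reg(M) \;=\; \max\{\, j - i \;:\; \mathrm{Tor}_i^S(M, k)_j \neq 0 \,\},$$
and similarly for $N$ and $P$. Tensoring $0 \to M \to N \to P \to 0$ with the residue field $k = S/\mathfrak{m}$ produces, in every internal degree $j$, a long exact sequence
$$\cdots \to \mathrm{Tor}_{i+1}^S(P, k)_j \to \mathrm{Tor}_i^S(M, k)_j \to \mathrm{Tor}_i^S(N, k)_j \to \mathrm{Tor}_i^S(P, k)_j \to \cdots$$
with graded connecting maps of degree zero.

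For part (1), I would fix an arbitrary pair $(i,j)$ with $\mathrm{Tor}_i^S(M, k)_j \neq 0$. Exactness at that spot forces either $\mathrm{Tor}_i^S(N, k)_j \neq 0$, which yields $j - i \leq \reg(N)$, or the image from $\mathrm{Tor}_{i+1}^S(P, k)_j$ to be nontrivial, which forces $\mathrm{Tor}_{i+1}^S(P, k)_j \neq 0$ and hence $j - (i+1) \leq \reg(P)$, i.e.\ $j - i \leq \reg(P) + 1$. Taking the supremum of $j - i$ over all such pairs delivers $\reg(M) \leq \max\{\reg(N), \reg(P) + 1\}$.

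For part (2), assume $\reg(N) > \reg(P)$. Part (1) already gives $\reg(M) \leq \reg(N)$, since $\reg(P) + 1 \leq \reg(N)$. For the reverse inequality, I would pick $(i,j)$ realizing the regularity of $N$, so that $j - i = \reg(N)$ and $\mathrm{Tor}_i^S(N, k)_j \neq 0$. Because $j - i = \reg(N) > \reg(P)$, the group $\mathrm{Tor}_i^S(P, k)_j$ vanishes by the Betti-number characterization, so the map $\mathrm{Tor}_i^S(M, k)_j \to \mathrm{Tor}_i^S(N, k)_j$ is surjective. In particular $\mathrm{Tor}_i^S(M, k)_j \neq 0$, whence $\reg(M) \geq j - i = \reg(N)$.

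The argument is essentially bookkeeping once the $\mathrm{Tor}$ long exact sequence is written down, so no serious obstacle arises. The only delicate point is to keep the internal degree $j$ fixed throughout and to correctly read the homological shift in the connecting map as the $+1$ shift in the regularity bound of part (1). Alternatively, the same two-line chase can be carried out on the long exact sequence of local cohomology modules $H^i_{\mathfrak{m}}(-)$ using the equivalent definition $\reg(M) = \max\{i + j : H^i_{\mathfrak{m}}(M)_j \neq 0\}$; either route is standard, and I would cite \cite[Chapter 18]{Peeva} for the underlying characterization of regularity.
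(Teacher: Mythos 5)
Your argument is correct and complete: the Tor long exact sequence chase, combined with the Betti-number characterization $\reg(M)=\max\{j-i:\mathrm{Tor}_i^S(M,k)_j\neq 0\}$, yields both parts exactly as you describe. The paper itself offers no proof of this lemma, only a citation to \cite[Chapter 18]{Peeva}, and your write-up is precisely the standard argument that reference supplies.
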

\section{Saeedi Madani-Kiani conjecture}
In this section, we prove Conjecture \ref{con} for some classes of non-chordal graphs. Also, we give another proof of Conjecture \ref{con} for the class of chordal graphs. We begin by recalling a lemma by Ohtani which is 
highly useful in computing the regularity of binomial edge ideals.

\begin{lemma}\cite[Lemma 4.8]{oh}\label{ohlemma}
	Let $G$ be a  graph. If $v$ is an internal vertex, then $$J_G =J_{G_v} \cap ((x_v,y_v)+J_{G\setminus v}).$$
\end{lemma}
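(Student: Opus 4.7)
The plan is to work with the primary decomposition $J_G = \bigcap_{T \subseteq [n]} P_T(G)$ recalled above (due to Herzog et al.) and separately decompose the two ideals on the right-hand side, then match terms.

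First I would show $(x_v, y_v) + J_{G \setminus v} = \bigcap_{T \ni v} P_T(G)$. Viewing $G \setminus v$ as a graph on $[n] \setminus \{v\}$, the primary decomposition reads $J_{G \setminus v} = \bigcap_{T' \subseteq [n] \setminus \{v\}} P_{T'}(G \setminus v)$ inside $K[\{x_i, y_i : i \neq v\}]$, and the connected components of $G[[n] \setminus (T' \cup \{v\})]$ coincide with those of $(G \setminus v)[[n] \setminus (T' \cup \{v\})]$, giving the termwise identity $P_{T' \cup \{v\}}(G) = (x_v, y_v) + P_{T'}(G \setminus v)$. Since $(x_v, y_v)$ is generated by variables disjoint from the polynomial ring containing $J_{G \setminus v}$, addition of $(x_v, y_v)$ commutes with the intersection, and the desired identity follows.

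Next I would compare $P_T(G)$ with $P_T(G_v)$. Because $G \subseteq G_v$, every connected component of $G[[n] \setminus T]$ lies inside some component of $G_v[[n] \setminus T]$; monotonicity of complete-graph ideals under enlargement of the vertex set then yields $P_T(G) \subseteq P_T(G_v)$ for every $T$. When $v \notin T$, the extra edges of $G_v$ all lie among $N_G(v) \subseteq [n] \setminus T$, whose endpoints already lie in the component of $G[[n] \setminus T]$ containing $v$; so the two decompositions into components agree and $P_T(G) = P_T(G_v)$.

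Combining these observations,
\begin{align*}
J_{G_v} \cap ((x_v, y_v) + J_{G \setminus v})
&= \Bigl(\bigcap_{T \subseteq [n]} P_T(G_v)\Bigr) \cap \Bigl(\bigcap_{T \ni v} P_T(G)\Bigr) \\
&= \Bigl(\bigcap_{v \notin T} P_T(G)\Bigr) \cap \Bigl(\bigcap_{v \in T} P_T(G)\Bigr) = J_G,
\end{align*}
where for $v \notin T$ we use the equality $P_T(G_v) = P_T(G)$, and for $v \in T$ the intersection with $P_T(G)$ absorbs the possibly larger $P_T(G_v)$. The main technical obstacle is the first step, verifying that $(x_v, y_v)$ distributes over the intersection defining $J_{G \setminus v}$ and identifying the pieces with the $v \in T$ strata of the primary decomposition of $J_G$; everything else is routine bookkeeping of connected components. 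The internal-vertex hypothesis is needed only to guarantee $G_v \neq G$ so the lemma is non-vacuous; the identity itself is valid for arbitrary $v$.
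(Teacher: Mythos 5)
Your argument is correct. The paper gives no proof of this lemma --- it is quoted verbatim from Ohtani --- so there is no in-text argument to compare against; your derivation from the full decomposition $J_G=\bigcap_{T\subseteq[n]}P_T(G)$ of Herzog et al., which the paper does recall in Section~2, is sound and self-contained. The three ingredients all check out: first, $(x_v,y_v)+J_{G\setminus v}=\bigcap_{v\in T}P_T(G)$, where the distribution of $(x_v,y_v)$ over the intersection is legitimate because, writing $S=K[x_v,y_v]\otimes_K B$ with $B=K[x_i,y_i: i\neq v]$, one has $\bigl((x_v,y_v)+I\bigr)\cap B=I$ for any ideal $I\subseteq B$ (set $x_v=y_v=0$); second, $P_T(G_v)=P_T(G)$ whenever $v\notin T$, since every new edge of $G_v$ joins two vertices of $N_G(v)$ that already lie in the component of $v$ in $G[\bar T]$; third, $P_T(G)\subseteq P_T(G_v)$ for all $T$, so the $P_T(G)$ with $v\in T$ absorb the corresponding larger $P_T(G_v)$. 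It does matter that the paper states the decomposition as an intersection over \emph{all} subsets $T$ rather than only over the minimal primes; your argument uses exactly that form. Your closing remark is also right: the identity holds for an arbitrary vertex $v$ --- when $v$ is simplicial it reduces to $J_G\subseteq (x_v,y_v)+J_{G\setminus v}$, which is clear on the generators $x_iy_j-x_jy_i$ --- and the internal-vertex hypothesis serves only to make the decomposition nontrivial (and, in the paper's applications, to keep $\iv$ strictly decreasing).
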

Note that $J_{G_v} +((x_v,y_v)+J_{G\setminus v}) = (x_v,y_v)+J_{G_v \setminus v}$.
Therefore, we have the following short exact sequence:
 \begin{equation}\label{ses1}
0  \longrightarrow  \dfrac{S}{J_{G} } \longrightarrow
\dfrac{S}{J_{G_v}} \oplus \dfrac{S}{(x_v,y_v)+J_{G \setminus v}} \longrightarrow \dfrac{S}{(x_v,y_v)+J_{G_v \setminus v} } \longrightarrow 0.
\end{equation}
We first establish a connection between the number of internal vertices of $G$, $G_v$ and $G\setminus v$.
\begin{lemma}\label{ivlemma}
	Let $G$ be a  graph on $[n]$. If $v$ is an internal vertex of $G$, then $\iv(G)>\iv(G_v)$ and $\iv(G)>\iv(G \setminus v)$.
\end{lemma}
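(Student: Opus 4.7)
The plan is to show that for each of $G_v$ and $G\setminus v$ the set of internal vertices strictly shrinks compared to $G$. The hypothesis says $v\in\iv(G)$; on the other side, $v$ is absent from $V(G\setminus v)$, and, as I will show, $v$ becomes free in $G_v$. Hence it suffices to verify that no vertex $u\neq v$ which is free in $G$ becomes internal in either modified graph; a count then yields $\iv(G_v)\le \iv(G)-1$ and $\iv(G\setminus v)\le \iv(G)-1$.

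For $G_v$, the key observation is that $N_{G_v}(v)=N_G(v)$, because every edge of $E(G_v)\setminus E(G)$ joins two vertices of $N_G(v)$. Consequently $\{v\}\cup N_G(v)$ is itself a clique of $G_v$, and any maximal clique of $G_v$ containing $v$ must lie inside $\{v\}\cup N_{G_v}(v)=\{v\}\cup N_G(v)$; so $v$ is free in $G_v$. Now let $u\neq v$ be free in $G$, with unique maximal clique $C=\{u\}\cup N_G(u)$. If $u\notin N_G(v)$, then no added edge is incident to $u$, hence the subsets of $V(G)$ that are cliques of $G_v$ and contain $u$ coincide with those of $G$, and $u$ stays free in $G_v$. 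If $u\in N_G(v)$, then $\{u,v\}\in E(G)$ combined with the uniqueness of $C$ forces $v\in C$, so $N_G(u)\subseteq\{v\}\cup N_G(v)$; a direct computation then gives $\{u\}\cup N_{G_v}(u)=\{v\}\cup N_G(v)$, and because this set is a clique of $G_v$ it is the unique maximal clique of $G_v$ through $u$. Either way, $u$ remains free.

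For $G\setminus v$, let $u\neq v$ be free in $G$ with unique maximal clique $C$. Any maximal clique $C'$ of $G\setminus v$ through $u$ is a clique of $G$ through $u$, hence $C'\subseteq C$; since $C\setminus\{v\}$ is itself a clique of $G\setminus v$ through $u$, maximality forces $C'=C\setminus\{v\}$. So $u$ is free in $G\setminus v$, completing the count. The one delicate step in the whole argument is the sub-case $u\in N_G(v)$ for $G_v$: one needs the freeness of $u$ to bring about the inclusion $N_G(u)\subseteq\{v\}\cup N_G(v)$ and thereby identify $\{v\}\cup N_G(v)$ as the unique maximal clique of $G_v$ through $u$; once this structural fact is secured, the rest is routine.
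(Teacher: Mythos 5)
Your proof is correct and follows essentially the same route as the paper's: both arguments reduce to showing that $v$ and every free vertex of $G$ remain free in $G_v$ and in $G\setminus v$ (the paper phrases this contrapositively, asserting that each internal vertex of $G_v$, respectively of $G\setminus v$, is already internal in $G$), and you simply supply the clique-level details the paper leaves implicit. One small point: in the sub-case $u\notin N_G(v)$, the claim that the cliques of $G_v$ through $u$ coincide with those of $G$ does not follow from ``no added edge is incident to $u$'' alone — it also uses that $u$ is free, so that every clique of $G_v$ through $u$ lies in $\{u\}\cup N_{G_v}(u)=\{u\}\cup N_G(u)=C$, which is already a clique of $G$.
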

\begin{proof}
If $\iv(G_v)=0$, then $\iv(G)>\iv(G_v).$ Let $\iv(G_v)=k >0$. Let $v_1,\ldots, v_k$ be the internal
vertices of $G_v$. Note that $v$ is a free vertex of $G_v$. 
Assume that $v_{r+1}, \ldots , v_k \in N_G(v)=N_{G_v}(v)$.
For $r+1 \leq i \leq k$, $v_i$ is not a free vertex of 
$G_v$. Therefore, $v_i$ is  not a free vertex of $G$.
 For $1 \leq i \leq r$, $v_i$ 
is an internal vertex of $G$, since $v_i \notin N_G(v)$. As $v$ is an internal vertex
of $G$, we have $\iv(G) \geq k+1$. Let $u$ be a free vertex of $G$. In $G\setminus v$, $u$ is either
a free vertex or an isolated vertex, i.e., $u$ is not an internal vertex of $G \setminus v$. Hence, 
		$\iv(G) >\iv(G \setminus v)$.
\end{proof}

So far Conjecture \ref{con} has been proved  only for chordal graphs, (see \cite[Theorem 3.5]{MKM2018}). 
There have been  no attempts on non-chordal graphs. Now, we prove Conjecture \ref{con} for a class of non-chordal graphs. 

Let $H$ be a connected closed graph on $[n]$ such that $S/J_H$ is Cohen-Macaulay. By \cite[Theorem 3.1]{her1},
there exist integers $1=a_1 < a_2< \cdots < a_s <a_{s+1} = n$ such that $F_i=[a_i,a_{i+1}]$,
for $1\leq i \leq s$ and $F_1,\ldots,F_s$ is a leaf order of $\Delta(H)$. Set $e=\{1,n\}$. The graph $G=H \cup \{e\}$ 
is called the \textit{quasi-cycle} graph associated with $H$. In \cite{FM}, the Hilbert series of the binomial edge ideal of quasi-cycles was studied.
\begin{remark}\label{quasi-rmk}
	Let $G$ be the quasi-cycle graph associated with a Cohen-Macaulay closed graph $H$. 
	Let  $F_1,\ldots,F_s$ be a leaf order of $\Delta(H)$. If $H \neq P_3$, then $\iv(G)\geq s$ 
	and $\iv(H) =s-1$. If $s=2$, then $G$ is a chordal graph. If $s>2$, then $G$ has an induced 
	cycle  of length $s+1$ on the vertex set $\{a_1,a_2,\ldots,a_s,a_{s+1}\}$. Thus, for $s>2$, $G$ is not a  chordal graph. 
\end{remark}
\begin{example}
	Let $H$ and $G$	be the graphs shown in the figure below. It can be observed that $H$ is a 
	Cohen-Macaulay closed graph and $G$ is a quasi-cycle graph associated with $H$.
	
	\begin{minipage}{\linewidth}
		\begin{minipage}{.45\linewidth}
			\captionsetup[figure]{labelformat=empty}
			\begin{figure}[H]
				\begin{tikzpicture}[scale=1]		
				\draw (-1,4)-- (-1,3);
				\draw (-1,3)-- (0,4);
				\draw (-1,4)-- (0,4);
				\draw (0,4)-- (1,4);
				\draw (1,4)-- (1,3);
				\draw (0,4)-- (1,3);
				\draw (-1,3)-- (-1,2);
				\draw (-1,2)-- (0,2);
				\draw (-1,3)-- (0,2);
				\begin{scriptsize}
				\fill  (-1,4) circle (1.5pt);
				\draw (-0.84,4.26) node {$4$};
				\fill (0,4) circle (1.5pt);
				\draw (0.16,4.26) node {$5$};
				\fill (-1,3) circle (1.5pt);
				\draw (-1.32,3.08) node {$3$};
				\fill (1,3) circle (1.5pt);
				\draw (1.14,2.74) node {$7$};
				\fill (1,4) circle (1.5pt);
				\draw (1.26,4.2) node {$6$};
				\fill (-1,2) circle (1.5pt);
				\draw (-1.28,2.08) node {$2$};
				\fill (0,2) circle (1.5pt);
				\draw (0.3,2) node {$1$};
				\end{scriptsize}
				\end{tikzpicture}
				\caption{$H$}
			\end{figure}
		\end{minipage}
		\begin{minipage}{.45\linewidth}
			\captionsetup[figure]{labelformat=empty}
			\begin{figure}[H]
				\begin{tikzpicture}[scale=1]		
				\draw (-1,4)-- (-1,3);
				\draw (-1,3)-- (0,4);
				\draw (-1,4)-- (0,4);
				\draw (0,4)-- (1,4);
				\draw (1,4)-- (1,3);
				\draw (0,4)-- (1,3);
				\draw (-1,3)-- (-1,2);
				\draw (-1,2)-- (0,2);
				\draw (0,2)-- (1,3);
				\draw (-1,3)-- (0,2);
				\begin{scriptsize}
				\fill  (-1,4) circle (1.5pt);
				\draw (-0.84,4.26) node {$4$};
				\fill (0,4) circle (1.5pt);
				\draw (0.16,4.26) node {$5$};
				\fill (-1,3) circle (1.5pt);
				\draw (-1.32,3.08) node {$3$};
				\fill (1,3) circle (1.5pt);
				\draw (1.14,2.74) node {$7$};
				\fill (1,4) circle (1.5pt);
				\draw (1.26,4.2) node {$6$};
				\fill (-1,2) circle (1.5pt);
				\draw (-1.28,2.08) node {$2$};
				\fill (0,2) circle (1.5pt);
				\draw (0.3,2) node {$1$};
				\end{scriptsize}
				\end{tikzpicture}
				\caption{$G$}			
			\end{figure}
		
		\end{minipage}
	\end{minipage}
	
\end{example}

For a graph $G$, a maximal subgraph of $G $ without a cut vertex is called a  \textit{block} of $G$. A graph $G$ is said to be a \textit{block} graph if each  block of $G$ is a clique.
A block $B$ of a graph $G$ is called a \textit{quasi-block} if $B$ is a quasi-cycle other than $K_3$. 
\begin{definition}
	A graph $G$ is said to be a \textit{quasi-block} graph if $G$ satisfies the following:
	\begin{enumerate}
		\item Each block of $G$ is either a clique or a quasi-block.
		\item If $v$ is an internal vertex of a quasi-block    $B$, then for any $u \in N_G(v) \setminus V(B)$, $u$ is not an internal vertex of any block.
	\end{enumerate}
\end{definition}
One can note that a quasi-block graph need  not be a chordal graph. We denote by $\qc(G)$, the number of quasi-blocks in $G$.
\begin{remark}\label{quasi-rmk1}
	If $G$ is a connected quasi-block graph and $v$ is an internal vertex of $G$, then $G\setminus v$ is a quasi-block graph.
	If $B$ is a quasi-block of $G$ and $v \in V(B)$ is an internal vertex of $B$, then $G\setminus v$ is a quasi-block graph with $\qc(G\setminus v)=\qc(G)-1$. 
\end{remark} 
\begin{theorem}\label{quasi-block}
	Let $G$ be a quasi-block graph. Then, $\reg(S/J_G) \leq c(G)$.
\end{theorem}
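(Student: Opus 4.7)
The plan is to proceed by induction on the number of quasi-blocks $\qc(G)$.

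\textbf{Base case.} If $\qc(G)=0$, then every block of $G$ is a clique, so $G$ is a block graph. The inequality $\reg(S/J_G)\le c(G)$ is then the theorem of Ene–Zarojanu for block graphs (or, more generally, the chordal case due to Rouzbahani Malayeri, Saeedi Madani and Kiani), both of which are cited in the introduction.

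\textbf{Inductive step.} Assume $\qc(G)\ge 1$ and the statement holds for every quasi-block graph with strictly fewer quasi-blocks. Pick a quasi-block $B$ of $G$. Since $B$ is a quasi-cycle distinct from $K_3$, Remark \ref{quasi-rmk} guarantees that $B$ has at least one internal vertex, and by the numbering of a leaf order one can choose such a $v\in V(B)$ that is internal to $B$ in $G$. Apply Lemma \ref{ohlemma} and the short exact sequence \eqref{ses1} at $v$. Lemma \ref{regularity-lemma} then yields
\begin{equation*}
\reg(S/J_G)\le \max\bigl\{\reg(S/J_{G_v}),\ \reg(S/J_{G\setminus v}),\ \reg(S/J_{G_v\setminus v})+1\bigr\},
\end{equation*}
where I have already absorbed the variables $x_v,y_v$ (so that $\reg(S/((x_v,y_v)+J_H))=\reg(S'/J_H)$ for the relevant $H$).

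The middle term is handled by induction: Remark \ref{quasi-rmk1} gives that $G\setminus v$ is a quasi-block graph with $\qc(G\setminus v)=\qc(G)-1$, so $\reg(S/J_{G\setminus v})\le c(G\setminus v)$ by the inductive hypothesis; and since deleting a vertex can only destroy maximal cliques containing it, one checks directly that $c(G\setminus v)\le c(G)$. The other two terms require a structural analysis of $G_v$ and $G_v\setminus v$: adding all the edges among $N_G(v)$ turns the quasi-cycle $B$ into a chordal (in fact block-like) subgraph because $v$ is internal to $B$, and condition (2) in the definition of a quasi-block graph is precisely what guarantees that the vertices in $N_G(v)\setminus V(B)$ are simplicial in their blocks, so that the completion $N_G(v)\cup\{v\}$ does not create any new quasi-cycles. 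I expect to conclude that $G_v$ and $G_v\setminus v$ are quasi-block graphs with strictly fewer quasi-blocks than $G$, with $c(G_v)\le c(G)$ and $c(G_v\setminus v)\le c(G)-1$, so that the inductive hypothesis applies.

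\textbf{Main obstacle.} The essential difficulty is not the use of Ohtani's lemma, which is routine, but the combinatorial bookkeeping showing that $G_v$ and $G_v\setminus v$ remain quasi-block graphs after the chord-completion at $v$, together with the sharp estimates $c(G_v)\le c(G)$ and $c(G_v\setminus v)+1\le c(G)$. This is exactly where condition (2) of the definition plays its role: it prevents an internal vertex of one quasi-block from being adjacent to an internal vertex of another block, so that forming $G_v$ only \emph{simplifies} the block decomposition around $v$ rather than creating new non-chordal substructures. Once this structural claim is established, both $G_v$ and $G_v\setminus v$ satisfy the inductive hypothesis, and combining the three bounds yields $\reg(S/J_G)\le c(G)$.
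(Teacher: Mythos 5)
Your outer skeleton (reduce to the connected case, induct on $\qc(G)$, apply Ohtani's lemma at an internal vertex $v$ of a quasi-block $B$, and control the three terms of the exact sequence \eqref{ses1} via Lemma \ref{regularity-lemma}) matches the paper. But the load-bearing structural claim in your inductive step --- that $G_v$ is a quasi-block graph with \emph{strictly fewer} quasi-blocks than $G$ --- is false in general, and this is exactly the point where a single induction on $\qc(G)$ breaks down. When the quasi-block $B$ has many internal vertices, completing the neighbourhood of one internal vertex $v$ merely shortens the quasi-cycle: the block $B_v$ of $G_v$ containing $v$ is again a quasi-cycle other than $K_3$, so $\qc(G_v)=\qc(G)$ and your inductive hypothesis does not apply to $G_v$. (The paper records this explicitly: $B_v$ is a clique only when $\iv(B)\le 3$; for $\iv(B)>3$ it is still a quasi-block.) Condition (2) of the definition controls what happens \emph{outside} $B$, but it does nothing to destroy the quasi-cycle structure \emph{inside} $B$. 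The fix in the paper is a nested induction: an outer induction on $\qc(G)$ and an inner induction on $\iv(B)$ for the quasi-block being processed, using Lemma \ref{ivlemma} to guarantee $\iv(B_v)<\iv(B)$, with the cases $\iv(B)\in\{2,3\}$ (where $B_v$ genuinely becomes a clique and $\qc$ drops) serving as the base of the inner induction. The choice of $v$ also matters: the paper takes $v$ to be an internal vertex of $B\setminus e$ (the underlying Cohen--Macaulay closed graph), not an arbitrary internal vertex of $B$, so that $B_v$ has the claimed form.

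A secondary, more minor gap: to absorb the $+1$ on the third term you assert $c(G_v\setminus v)\le c(G)-1$ and that $G_v\setminus v$ is again a quasi-block graph, neither of which you verify and neither of which is needed. The paper instead observes that $G_v\setminus v$ is an induced subgraph of $G_v$, so by \cite[Proposition 8]{KM2} one has $\reg(S/((x_v,y_v)+J_{G_v\setminus v}))\le\reg(S/J_{G_v})$, and the strict inequality $\reg(S/J_{G_v})\le c(G_v)<c(G)$ (which comes for free from $c(G_v)<c(G)$) then handles the shift in Lemma \ref{regularity-lemma}. You should adopt that route rather than trying to bound $c(G_v\setminus v)$ directly.
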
 
\begin{proof}
Let $G_1,\ldots,G_c$ be the connected components of $G$. Set $S_{G_i}=K[x_j,y_j :j\in V(G_i)]$.
Then $S/J_G\cong S_{G_1}/J_{G_1}\otimes \cdots \otimes S_{G_c}/J_{G_c}$ which implies that 
$\reg( S/J_G)=\reg( S_{G_1}/J_{G_1})+\cdots + \reg( S_{G_c}/J_{G_c})$. Also, $c(G)= c(G_1)+\cdots+c(G_c)$.
		Therefore, without loss of generality, we may assume that $G$ is a connected graph. We prove the result by induction on $\qc(G)$. 
	If $\qc(G)=0$, then every block of $G$ is a clique, thus $G$ is a block graph. By \cite[Theorem 3.9]{EZ}, the assertion follows. 
	
	Assume that $\qc(G)>0$. Let $B_1,\ldots,B_{\qc(G)}$ be the quasi-blocks of $G$. Now, we proceed by induction on the number of internal
	vertices of $B=B_{\qc(G)}$. It follows from Remark \ref{quasi-rmk} that $\iv(B) \geq 2$. If $\iv(B)=2$, then there
	exists an edge $e$ such that $B\setminus e$ is a  Cohen-Macaulay closed graph with exactly one internal vertex. Let $v \in V(B)$ be the 
	internal vertex of $B\setminus e$. Therefore, $v$ is also an internal vertex of $G$. By Lemma \ref{ohlemma}, $J_G=J_{G_v} \cap ((x_v,y_v)+J_{G\setminus v})$.
	Let $B_v$ be the block of $G_v$ which contains $v$. Note that $B_v$ is a clique as $\iv(B) =2$.
	Since $v$ is an internal vertex of $B$, $v \notin V(B_i)$, for $i=1,\ldots, \qc(G)-1$ and hence, $\qc(G_v) = \qc(G)-1$.   Therefore, by
	induction $\reg(S/J_{G_v})\leq c(G_v) < c(G)$. It follows from Remark \ref{quasi-rmk1} and  induction that
	$\reg(S/((x_v,y_v)+J_{G \setminus v})) \leq c(G\setminus v)  \leq c(G) $. Since $G_v \setminus v$ is an induced subgraph of
	$G_v$, by \cite[Proposition 8]{KM2}, $\reg(S/((x_v,y_v)+J_{G_v \setminus v}))\leq \reg(S/J_{G_v})< c(G)$. Thus, it follows 
	from Lemma \ref{regularity-lemma} and the short exact sequence (\ref{ses1}) that $\reg(S/J_G)\leq c(G)$.
	
	Now, assume that $\iv(B)>2$.  Let  $v \in V(B)$ be an internal vertex of $B \setminus e$. Therefore, $v$ is 
	an internal vertex of $G$. Again, by Lemma \ref{ohlemma}, $J_G =J_{G_v} \cap ((x_v,y_v)+J_{G \setminus v})$.
	By Remark \ref{quasi-rmk1} and induction, $\reg(S/((x_v,y_v)+J_{G\setminus v})) \leq c(G\setminus v) \leq c(G)$. 
	Let $B_v$ be the  block of $G_v$ which contains $v$. If $\iv(B)=3$, then $B_v$ is a clique. Therefore, $\qc(G_v) =\qc(G)-1$ and
	hence, by induction $\reg(S/J_{G_v})\leq c(G_v)< c(G)$. If $\iv(B)>3$, then $B_v$ is a quasi-block. By Lemma \ref{ivlemma}, $\iv(B_v) <\iv(B)$. Since $v$ is an internal vertex of $B$, $v \notin V(B_i)$, for $i=1,\ldots, \qc(G)-1$ and hence, $\qc(G_v) = \qc(G)$. Therefore, $G_v$ is a quasi-block graph with $\qc(G_v)=\qc(G)$ and  $\iv(B_v) <\iv(B)$.
	By induction, $\reg(S/J_{G_v})\leq c(G_v)< c(G)$, since $\iv(B_v) <\iv(B)$. It follows from \cite[Proposition 8]{KM2}
	that $\reg(S/((x_v,y_v)+J_{G_v \setminus v})) \leq \reg(S/J_{G_v}) < c(G)$ as $G_v \setminus v$ is an induced subgraph of
	$G_v$. Hence, using Lemma \ref{regularity-lemma} in  the short exact sequence (\ref{ses1}), we conclude that  $\reg(S/J_G) \leq c(G)$.
\end{proof}
The following example illustrates that the upper bound obtained in Theorem \ref{quasi-block} is tight.
\begin{example}
Let $G$ be a quasi-cycle graph such that for some $1 \leq i \leq s-1$,
$|F_i|>2$ and $|F_{i+1}| >2$. Then $c(G) = l(G\setminus a_{i+1}) \leq l(G)$. Hence, it follows from \cite[Theorem 1.1]{MM} and Theorem \ref{quasi-block} that $\reg(S/J_G)=c(G)$. 
\end{example}
Also, the upper bound obtained in Theorem \ref{quasi-block} can be strict upper bound. For example, 
$G=C_n$ is a quasi-block graph such that $c(G)=n$ and by \cite[Corollary 16]{SZ}, $\reg(S/J_G)=c(G)-2<c(G)$.

Let $H$ be a connected closed graph on $[m]$ such that $S_H/J_H$ is Cohen-Macaulay. 
By \cite[Theorem 3.1]{her1}, there exist integers $1=a_1 < a_2< \cdots < a_s <a_{s+1} = m$
such that $F_i=[a_i,a_{i+1}]$, for $1\leq i \leq s$ and $F_1,\ldots,F_s$ is a leaf order of $\Delta(H)$.
Set $F_{s+1}=[m,n] \cup \{1\}$. The graph $G$  on the vertex set $[n]$ and edge set 
$E(G) =E(H) \cup \{\{i,j\}: i \neq j, i,j \in F_{s+1}\}$ is called a \textit{semi-cycle} graph associated with $H$.

\begin{example}
	Let $H$ and $G$	be the graphs shown in the figure below. Then, it can be seen that $H$ is 
	a Cohen-Macaulay closed graph and $G$ is a semi-cycle graph associated with $H$.
	\begin{minipage}{\linewidth}
		\begin{minipage}{.45\linewidth}
			\captionsetup[figure]{labelformat=empty}
			\begin{figure}[H]
				\begin{tikzpicture}[scale=1]		
				\draw (-1,4)-- (-1,3);
				\draw (-1,3)-- (0,4);
				\draw (-1,4)-- (0,4);
				\draw (0,4)-- (1,4);
				\draw (1,4)-- (1,3);
				\draw (0,4)-- (1,3);
				\draw (-1,3)-- (-1,2);
				\draw (-1,2)-- (0,2);
				\draw (-1,3)-- (0,2);
				\begin{scriptsize}
				\fill  (-1,4) circle (1.5pt);
				\draw (-0.84,4.26) node {$4$};
				\fill (0,4) circle (1.5pt);
				\draw (0.16,4.26) node {$5$};
				\fill (-1,3) circle (1.5pt);
				\draw (-1.32,3.08) node {$3$};
				\fill (1,3) circle (1.5pt);
				\draw (1.14,2.74) node {$7$};
				\fill (1,4) circle (1.5pt);
				\draw (1.26,4.2) node {$6$};
				\fill (-1,2) circle (1.5pt);
				\draw (-1.28,2.08) node {$2$};
				\fill (0,2) circle (1.5pt);
				\draw (0.3,2) node {$1$};
				\end{scriptsize}
				\end{tikzpicture}
				\caption{$H$}
			\end{figure}
		\end{minipage}
		\begin{minipage}{.45\linewidth}
			\captionsetup[figure]{labelformat=empty}
			\begin{figure}[H]
				\begin{tikzpicture}[scale=1]
				\draw (-1,4)-- (-1,3);
				\draw (-1,3)-- (0,4);
				\draw (-1,4)-- (0,4);
				\draw (0,4)-- (1,4);
				\draw (1,4)-- (1,3);
				\draw (0,4)-- (1,3);
				\draw (-1,3)-- (-1,2);
				\draw (-1,2)-- (0,2);
				\draw (0,2)-- (1,3);
				\draw (-1,3)-- (0,2);
				\draw (1,2)-- (0,2);
				\draw (1,3)-- (1,2);
				\begin{scriptsize}
				\fill  (-1,4) circle (1.5pt);
				\draw (-0.84,4.26) node {$4$};
				\fill (0,4) circle (1.5pt);
				\draw (0.16,4.26) node {$5$};
				\fill (-1,3) circle (1.5pt);
				\draw (-1.32,3.08) node {$3$};
				\fill (1,3) circle (1.5pt);
				\draw (1.14,2.74) node {$7$};
				\fill (1,4) circle (1.5pt);
				\draw (1.26,4.2) node {$6$};
				\fill (-1,2) circle (1.5pt);
				\draw (-1.16,1.78) node {$2$};
				\fill (0,2) circle (1.5pt);
				\draw (0.12,1.78) node {$1$};
				\fill (1,2) circle (1.5pt);
				\draw (1.12,1.8) node {$8$};
				\end{scriptsize}
				\end{tikzpicture}
				\caption{$G$}
			\end{figure}
		\end{minipage}
	\end{minipage}
\end{example}
\begin{remark}
	Let $G$ be a semi-cycle graph associated with the Cohen-Macaulay closed graph $H$. 
	Let  $F_1,\ldots,F_s$ be a leaf order of $\Delta(H)$. If $H \neq P_3$, then $\iv(G)\geq s$ and $\iv(H) =s-1$.
	If $s=2$, then $G$ is a chordal graph. If $s>2$, then $G$ has an induced cycle on the vertex set $\{a_1,a_2,\ldots,a_s,a_{s+1}\}$
	of length $s+1$ and hence, $G$ is not chordal. Note that every quasi-cycle is a semi-cycle, but a semi-cycle need not be a quasi-cycle.
	Also, $C_n$ is a semi-cycle graph.
\end{remark} 

A block $B$ of a graph $G$ is said to be a \textit{semi-block} if $B$ is a semi-cycle with $B \neq K_3$.
A graph $G$ is said to be a \textit{semi-block} graph if all except one block are cliques and the block which 
is not a clique is a semi-block.

We now prove Conjecture \ref{con} for semi-block graphs.

\begin{theorem}\label{semi-reg}
	Let $G$ be a semi-block graph. Then, $\reg(S/J_G) \leq c(G)$.
\end{theorem}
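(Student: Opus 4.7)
The plan is to mimic the proof of Theorem \ref{quasi-block}, exploiting the fact that a semi-block graph has at most one non-clique block, so that the outer induction on $\qc$ there collapses into a single induction on the number of internal vertices of the unique semi-block. Using the tensor-product decomposition $S/J_G \cong S_{G_1}/J_{G_1} \otimes \cdots \otimes S_{G_c}/J_{G_c}$ over the connected components, I first reduce to the case where $G$ is connected. If $G$ has no semi-block, every block of $G$ is a clique and the assertion follows from \cite[Theorem 3.9]{EZ}. Otherwise, let $B$ be the unique semi-block of $G$, arising from a Cohen-Macaulay closed graph $H$ on $[m]$ with leaf order $F_1,\ldots,F_s$ and extra clique $F_{s+1} = [m,n] \cup \{1\}$; then $\iv(B) \geq 2$, and I proceed by induction on $\iv(B)$.

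For the inductive step, I pick an internal vertex $v$ of $B$ and apply Lemma \ref{ohlemma} to obtain the short exact sequence (\ref{ses1}). The neighborhood closure at $v$ merges two consecutive facets of the semi-cycle $B$: either $F_{i-1}$ and $F_i$ when $v = a_i$ for some $2 \leq i \leq s$, or $F_1$ and $F_{s+1}$ when $v \in \{a_1, a_{s+1}\}$. Accordingly, the block $B_v$ of $G_v$ containing $v$ is either a clique (this happens exactly when $\iv(B) = 2$, i.e., $s=1$) or a semi-cycle with $\iv(B_v) = \iv(B) - 1$. Thus $G_v$ is either a block graph or a semi-block graph whose unique semi-block has strictly fewer internal vertices, so \cite[Theorem 3.9]{EZ} or the induction hypothesis gives $\reg(S/J_{G_v}) \leq c(G_v) = c(G) - 1$. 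Simultaneously, removing the internal vertex $v$ breaks the $2$-connected backbone cycle $a_1 a_2 \cdots a_{s+1} a_1$ of $B$, so $B \setminus v$ decomposes into cliques glued at cut vertices; hence $G \setminus v$ is a block graph, and \cite[Theorem 3.9]{EZ} gives $\reg(S/((x_v,y_v) + J_{G\setminus v})) \leq c(G \setminus v) \leq c(G)$. Finally, \cite[Proposition 8]{KM2} applied to the induced subgraph $G_v \setminus v$ of $G_v$ yields $\reg(S/((x_v,y_v) + J_{G_v \setminus v})) \leq \reg(S/J_{G_v}) \leq c(G) - 1$. Combining these estimates via Lemma \ref{regularity-lemma}(1) applied to (\ref{ses1}) then gives $\reg(S/J_G) \leq c(G)$.

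The hard part will be verifying the structural claims about $G_v$ and $G\setminus v$ cleanly. For $G_v$, I must show that merging two consecutive facets of $H$ (or $F_1$ with $F_{s+1}$) again produces a Cohen-Macaulay closed graph; this follows from the interval characterization in \cite[Theorem 3.1]{her1}, after a suitable relabeling of vertices in the $v \in \{a_1, a_{s+1}\}$ case so that $B_v$ fits the definition of a semi-cycle, and direct counting then confirms $c(G_v) = c(G)-1$. For $G\setminus v$, I must check that removing any internal vertex of $B$ destroys the unique $2$-connected non-clique block; this uses the fact that consecutive facets of $H$ share only the single vertex $a_{j+1}$, so any cycle in $B$ which is not contained in a single facet must traverse the backbone cycle, and the latter is broken as soon as $v$ is removed.
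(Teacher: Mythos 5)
Your overall strategy --- reduce to a connected graph, induct on $\iv(B)$ for the unique non-clique block $B$, split via Lemma \ref{ohlemma} and the sequence (\ref{ses1}), and bound the three terms using \cite[Theorem 3.9]{EZ} for $G\setminus v$, the induction hypothesis for $G_v$, and \cite[Proposition 8]{KM2} for $G_v\setminus v$ --- is the same as the paper's. But the quantitative claim on which your induction closes, namely $c(G_v)=c(G)-1$, is false, and it fails at exactly the step where you need it. Take $\iv(B)=4$, so $B$ is a necklace of four cliques $F_1,F_2,F_3,F_4$ glued cyclically at the single vertices $a_2,a_3,m,1$, with all facets of size at least $3$. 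For $v=a_2$, the block $B_v$ is the necklace of the three cliques $F_1\cup F_2=[1,a_3]$, $F_3$, $F_4$; but now $\{1,a_3,m\}$ is a triangle of $B_v$ (the edge $\{1,a_3\}$ is new since $1,a_3\in N_B(a_2)$, while $\{a_3,m\}\subseteq F_3$ and $\{m,1\}\subseteq F_4$), it lies in no facet, and a short check of neighborhoods shows it is a \emph{maximal} clique. Hence $c(B_v)=4=c(B)$ and $c(G_v)=c(G)$, not $c(G)-1$. With only $\reg(S/J_{G_v})\le c(G_v)\le c(G)$ available from the induction hypothesis, the quotient term in (\ref{ses1}) contributes $\reg(S/((x_v,y_v)+J_{G_v\setminus v}))+1\le c(G)+1$, and Lemma \ref{regularity-lemma}(1) then only yields $\reg(S/J_G)\le c(G)+1$.

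This is precisely the difficulty the paper's proof is engineered around: in the case $\iv(B)=3$ it proves the strictly stronger bound $\reg(S/J_G)\le c(G)-1$ (possible because there $G_v$ is chordal with $c(G_v)\le c(G)-2$, the extra triangle being absorbed into the merged facet), and it feeds that strengthened estimate into the step $\iv(B)=4$, where $c(G_v)\le c(G)$ is all one can say; only for $\iv(B)\ge 5$ does $c(G_v)<c(G)$ hold and plain induction suffice. To repair your argument you must either prove the same strengthened statement for small $\iv(B)$ or otherwise obtain $\reg(S/J_{G_v})\le c(G)-1$ at $\iv(B)=4$; the counting alone will not give it. A secondary point: if the internal vertex $v$ of $B$ is also a cut vertex of $G$, the block of $G_v$ containing $v$ absorbs the other blocks of $G$ through $v$, so even the identification of $B_v$ and your asserted equality (rather than an inequality) for $c(G_v)$ require more care; this particular issue only improves the bound, but your ``direct counting'' would have to account for it.
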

\begin{proof}
	Let $B$ be the semi-block of $G$. If $B$ is a quasi-block of $G$, then $G$ is a quasi-block graph. Therefore, by Theorem \ref{quasi-block}, the assertion follows.	 Assume that $B$ is not a quasi-block, i.e., $\iv(B) \geq 3$ 
	and for each $1 \leq i \leq s+1$, $|F_i| \geq 3$.
	We proceed by induction on $\iv(B)$. For $\iv(B) =3$, we claim that $\reg(S/J_G)\leq c(G)-1$.
	Let $v \in V(B)$ be an internal vertex of $B$. Therefore, $v$ is also an internal vertex of $G$. It follows
	from Lemma \ref{ohlemma} that $J_G =J_{G_v} \cap ((x_v,y_v)+J_{G \setminus v})$. Let $B_v$ be the block of $G_v$ which contains $v$.
	Note that $B_v$ is a chordal graph with $c(B_v)=2$ and hence, $G_v$ is a chordal graph with $c(G_v)\leq c(G) -2$. It follows from \cite[Theorem 3.5]{MKM2018} that $\reg(S/J_{G_v}) \leq c(G_v)\leq c(G)-2$. As $G\setminus v$ 
	is a block graph with $c(G \setminus v) \leq c(G)-1$, by \cite[Theorem 3.9]{EZ}, $\reg(S/((x_v,y_v)+J_{G\setminus v})) \leq c(G\setminus v)\leq c(G)-1$. Since $G_v\setminus v$ is an induced subgraph of 
	$G_v$, by \cite[Proposition 8]{KM2}, $\reg(S/((x_v,y_v)+J_{G_v \setminus v})) \leq \reg(S/J_{G_v})$. Thus, it follows from Lemma \ref{regularity-lemma} and 
	the short exact sequence (\ref{ses1}) that $\reg(S/J_G)\leq c(G)-1$.
	
	Assume that $\iv(B)>3$ and let $v$ be an internal vertex of $B$. Then, $G \setminus v$ is a block graph with $c(G\setminus v)\leq c(G)$.
	Therefore, by \cite[Theorem 3.9]{EZ}, $\reg(S/((x_v,y_v)+J_{G\setminus v}))\leq c(G)$. Note that $B_v$ is a
	semi-block. It follows from Lemma \ref{ivlemma} that $\iv(B_v)<\iv(B)$. If $\iv(B) =4$, then $c(G_v)\leq c(G)$ and $\iv(B_v)=3$. Therefore, 
	$\reg(S/J_{G_v})\leq c(G_v)-1 <c(G)$. If $\iv(B)>4$, then $c(G_v)<c(G)$ and hence, by induction, $\reg(S/J_{G_v}) \leq c(G_v)< c(G)$.
	It follows from \cite[Proposition 8]{KM2} that $\reg(S/((x_v,y_v)+J_{G_v \setminus v})) \leq \reg(S/J_{G_v})<c(G)$. 
	Hence, by Lemma \ref{regularity-lemma} and the short exact sequence(\ref{ses1}), $\reg(S/J_G)\leq c(G)$.
\end{proof}

For a chordal graph $G$, we recall a result from \cite{MKM2018} which connects  $c(G)$ and $c(G_v)$.
\begin{lemma}\cite[Lemma 3.4]{MKM2018}\label{l}
	Let $G$ be a chordal graph and $v$ be a  vertex of $G$ which lies in $t$ maximal cliques of $G$. Then,  $c(G_v)\leq c(G)-t+1$. In particular, if $t \geq 2$, then $c(G_v)<c(G)$.
\end{lemma}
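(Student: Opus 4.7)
The plan is to reduce the inequality to the simpler bound $|\mathcal{M}| \leq s$, where $\mathcal{M}$ denotes the set of maximal cliques of $G_v$ not containing $v$ and $s = c(G) - t$ is the number of maximal cliques of $G$ not containing $v$ (i.e.\ if $C_1,\ldots,C_t$ are the maximal cliques of $G$ through $v$ and $D_1,\ldots,D_s$ are those avoiding $v$, then $c(G) = t+s$). First I would observe that $G_v$ has a \emph{unique} maximal clique containing $v$, namely $C^{\star} := \{v\} \cup N_G(v)$: by construction $N_G(v)$ is a clique in $G_v$ and $N_{G_v}(v) = N_G(v)$, so any $G_v$-clique through $v$ lies in $C^{\star}$, and $C^{\star}$ is itself a clique in $G_v$. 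Moreover, each $u \in N_G(v)$ lies in some $C_i$ (since the edge $\{u,v\}$ must be contained in a maximal clique containing $v$), so $C^{\star} = C_1 \cup \cdots \cup C_t$ and the $t$ maximal cliques through $v$ collapse into the single maximal clique $C^{\star}$ of $G_v$. Consequently $c(G_v) = 1 + |\mathcal{M}|$, and it suffices to prove $|\mathcal{M}| \leq s$.

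For the latter I would argue that every $M \in \mathcal{M}$ is in fact a maximal clique of $G$, so that the inclusion $\mathcal{M} \hookrightarrow \{D_1,\ldots,D_s\}$ gives the bound. First, $M \not\subseteq N_G(v)$, for otherwise $M \cup \{v\} \subseteq C^{\star}$ would properly extend $M$ in $G_v$; hence $M_2 := M \setminus N_G(v) \neq \emptyset$. The crucial step, and the only place chordality enters, is to show that $M$ uses no new edge of $G_v$. Suppose for contradiction that $\{a,b\} \in E(G_v) \setminus E(G)$ for some $a,b \in M$; then by definition of $G_v$ we have $a,b \in N_G(v)$. Pick any $c \in M_2$. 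The edges $\{a,c\}, \{b,c\}$ lie in $E(G_v)$ but cannot be newly added (since $c \notin N_G(v)$), so $\{a,c\},\{b,c\} \in E(G)$. Combined with $\{v,a\},\{v,b\} \in E(G)$ and $\{a,b\},\{v,c\} \notin E(G)$, this exhibits the induced $4$-cycle $v,a,c,b$ in $G$, contradicting chordality. Hence $M$ is already a clique of $G$, and its maximality in $G_v \supseteq G$ automatically forces maximality in $G$; since $v \notin M$ we conclude $M \in \{D_1,\ldots,D_s\}$.

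Putting the two steps together yields $c(G_v) = 1 + |\mathcal{M}| \leq 1 + s = c(G) - t + 1$, and the ``in particular'' clause is immediate: if $t \geq 2$ then $c(G_v) \leq c(G) - 1 < c(G)$. The only real obstacle is spotting the chordless $4$-cycle $v,a,c,b$ once one assumes a hypothetical new edge inside $M$; the remainder is routine bookkeeping about what happens to maximal cliques when $N_G(v)$ is completed.
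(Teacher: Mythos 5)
Your proof is correct and complete. Note that the paper does not actually prove this statement: it is quoted verbatim from \cite[Lemma 3.4]{MKM2018}, so there is no in-paper argument to compare against. Your argument stands on its own: the observation that $\{v\}\cup N_G(v)$ is the unique maximal clique of $G_v$ through $v$ gives $c(G_v)=1+|\mathcal{M}|$, and the induced $4$-cycle $v,a,c,b$ (using that any new edge of $G_v$ has both endpoints in $N_G(v)$, while a maximal clique of $G_v$ avoiding $v$ must contain a vertex outside $N_G(v)$) correctly shows that every maximal clique of $G_v$ not containing $v$ is already a maximal clique of $G$ avoiding $v$. This is essentially the standard argument for this lemma, and all the edge cases (distinctness of $v,a,b,c$, maximality of $\{v\}\cup N_G(v)$, and the passage from maximality in $G_v$ to maximality in $G$) are handled or follow immediately from what you wrote.
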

As a consequence of Lemma \ref{l}, we have the following.
\begin{corollary}\label{chordal-graph}
Let $G$ be a chordal graph. If $v$ is an internal vertex of $G$, then $G_v$ is a chordal graph and $c(G_v)<c(G)$.
\end{corollary}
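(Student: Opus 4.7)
The plan is to treat the two assertions separately. The inequality $c(G_v) < c(G)$ is almost immediate from Lemma \ref{l}: since $v$ is internal, by definition it is not a free vertex, so $v$ lies in at least two maximal cliques of $G$. Thus we may apply Lemma \ref{l} with $t \geq 2$ to conclude $c(G_v) \leq c(G) - t + 1 < c(G)$.

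For the chordality of $G_v$, I would argue by contradiction. Suppose $G_v$ contains an induced cycle $C = c_1 c_2 \cdots c_\ell$ with $\ell \geq 4$. The key observation is that $\{v\} \cup N_G(v)$ is a clique in $G_v$ by construction, so $V(C)$ meets this set in at most two vertices. A short case check rules out $v \in V(C)$: the two cycle-neighbors of $v$ would both lie in $N_G(v)$ and hence be adjacent in $G_v$, giving a chord of $C$ since $\ell \geq 4$. So we may assume $v \notin V(C)$. Let $S = V(C) \cap N_G(v)$, so $|S| \leq 2$. If $|S| \leq 1$, or if $|S| = 2$ but the two vertices of $S$ are non-consecutive along $C$, then no edge of $C$ can be a newly-added edge of $G_v$ (an added edge has both endpoints in $N_G(v)$, and the unique candidate $c_ic_j$ with $\{c_i,c_j\} = S$ is absent from the induced cycle $C$ in the non-consecutive subcase). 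Hence $C$ is already an induced cycle of length $\geq 4$ in $G$ — contradicting chordality.

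The one delicate situation, which I expect to be the main obstacle, is when $S = \{c_i, c_{i+1}\}$ consists of two consecutive vertices of $C$, since here the edge $c_i c_{i+1}$ in $G_v$ may or may not belong to $G$. The plan is to insert $v$ between them to form the cycle $C'$ in $G$ with vertex sequence $c_1, \ldots, c_i, v, c_{i+1}, \ldots, c_\ell$. All edges of $C'$ other than $c_i v$ and $v c_{i+1}$ are edges of $G$ (each has at most one endpoint in $N_G(v)$, so is not an added edge), while $c_i v, v c_{i+1} \in E(G)$ since $c_i, c_{i+1} \in N_G(v)$. If $c_i c_{i+1} \notin E(G)$, one checks that $C'$ is induced in $G$: non-adjacencies involving $v$ follow from $V(C) \cap N_G(v) = \{c_i, c_{i+1}\}$, and the remaining non-adjacencies descend from $C$ being induced in $G_v$. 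This yields an induced cycle of length $\ell + 1 \geq 5$ in $G$. If instead $c_i c_{i+1} \in E(G)$, then $C$ itself is an induced cycle of length $\ell \geq 4$ in $G$, since every edge of $C$ lies in $G$ and non-adjacencies pass from $G_v$ down to $G$. Either outcome contradicts chordality of $G$, completing the argument.
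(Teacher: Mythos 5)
Your argument is correct. For the inequality $c(G_v)<c(G)$ you do exactly what the paper does: an internal vertex lies in at least two maximal cliques, so Lemma \ref{l} applies with $t\geq 2$. The difference is in the chordality of $G_v$: the paper disposes of this in one line by citing the second paragraph of the proof of \cite[Theorem 3.5]{MKM2018}, whereas you supply a self-contained combinatorial proof. Your case analysis is sound: an induced cycle $C$ of length $\geq 4$ in $G_v$ meets the clique $\{v\}\cup N_G(v)$ in at most two vertices; the case $v\in V(C)$ is impossible because the two cycle-neighbours of $v$ would lie in $N_G(v)$ and hence form a chord; if $|V(C)\cap N_G(v)|\leq 1$, or the two intersection vertices are non-consecutive, then $C$ uses no added edge and is already induced in $G$; and in the remaining case, where $V(C)\cap N_G(v)=\{c_i,c_{i+1}\}$ are consecutive, either $C$ itself or the cycle obtained by inserting $v$ between $c_i$ and $c_{i+1}$ is an induced cycle of length $\geq 4$ in $G$ (the verification that the lengthened cycle is induced, using $V(C)\cap N_G(v)=\{c_i,c_{i+1}\}$ and $c_ic_{i+1}\notin E(G)$, is exactly right). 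What your route buys is independence from the external reference and a transparent reason why the construction $G\mapsto G_v$ preserves chordality; what the paper's route buys is brevity, since the same fact is already established in \cite{MKM2018}.
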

\begin{proof}
The assertion that $G_v$ is a chordal graph follows from the  second paragraph of the proof of \cite[Theorem 3.5]{MKM2018}. Since $v$ is an internal vertex, $v$ belongs to at least two maximal cliques and hence, by Lemma \ref{l}, $c(G_v)<c(G)$.
\end{proof}
\begin{remark}\label{chordal-graph1}
	If $G$  is a chordal graph, then $G\setminus v$ is a chordal graph for any $v$. If $v$ is an internal vertex of $G$, then $c(G \setminus v) \leq c(G)$.
\end{remark}
 In \cite{MKM2018}, Rouzbahani Malayeri et al. proved Saeedi Madani-Kiani conjecture for chordal graphs. They proved the result by induction on $n+c(G)$. 
We give another proof of the same by induction on $\iv(G)$.
\begin{theorem}\cite[Theorem 3.5]{MKM2018}\label{chordal-reg}
	Let $G$ be a chordal graph. Then, $\reg(S/J_G) \leq c(G)$.
\end{theorem}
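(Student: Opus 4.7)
My plan is to induct on $\iv(G)$, the number of internal vertices of $G$, mirroring the strategy already employed for quasi-block and semi-block graphs in Theorems \ref{quasi-block} and \ref{semi-reg}. For the base case $\iv(G)=0$, every vertex is free, which forces each maximal clique of $G$ to be a connected component (any vertex shared between two maximal cliques would be internal). Hence $G$ is a disjoint union of complete graphs, regularity of $S/J_G$ is additive over these components, and $\reg(S/J_{K_n})\leq 1$ for every $n$, so $\reg(S/J_G)\leq c(G)$.

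For the inductive step I would fix an internal vertex $v$ of $G$ and invoke Ohtani's Lemma \ref{ohlemma}, which gives $J_G=J_{G_v}\cap((x_v,y_v)+J_{G\setminus v})$ and the short exact sequence \eqref{ses1}. Applying Lemma \ref{regularity-lemma} reduces the proof to controlling three regularities. First, by Corollary \ref{chordal-graph} the graph $G_v$ remains chordal with $c(G_v)\leq c(G)-1$, and by Lemma \ref{ivlemma} it satisfies $\iv(G_v)<\iv(G)$; the inductive hypothesis then yields $\reg(S/J_{G_v})\leq c(G_v)\leq c(G)-1$. Second, by Remark \ref{chordal-graph1}, $G\setminus v$ is chordal with $c(G\setminus v)\leq c(G)$, and by Lemma \ref{ivlemma}, $\iv(G\setminus v)<\iv(G)$; since killing the linear forms $x_v,y_v$ does not alter regularity, induction gives $\reg(S/((x_v,y_v)+J_{G\setminus v}))\leq c(G)$. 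Third, because $G_v\setminus v$ is an induced subgraph of $G_v$, \cite[Proposition 8]{KM2} together with the first bound produces $\reg(S/((x_v,y_v)+J_{G_v\setminus v}))\leq \reg(S/J_{G_v})\leq c(G)-1$.

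Substituting these estimates into Lemma \ref{regularity-lemma} will give
\[
\reg(S/J_G)\;\leq\;\max\bigl\{c(G)-1,\ c(G),\ (c(G)-1)+1\bigr\}\;=\;c(G),
\]
closing the induction. The decisive place where chordality is used is the very first estimate: one needs $G_v$ to stay inside the inductive class, and this structural closure is precisely what Corollary \ref{chordal-graph} (via Lemma \ref{l}) guarantees. Once it is in hand, the remaining work is the standard Ohtani-sequence bookkeeping already deployed in Theorems \ref{quasi-block} and \ref{semi-reg}, so I do not expect any new obstacle beyond verifying the three estimates above.
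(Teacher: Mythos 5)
Your proposal is correct and follows essentially the same route as the paper: induction on $\iv(G)$, Ohtani's decomposition at an internal vertex, and the three regularity estimates via Corollary \ref{chordal-graph}, Lemma \ref{ivlemma}, Remark \ref{chordal-graph1} and \cite[Proposition 8]{KM2}. The only cosmetic difference is that you absorb disconnected graphs into the induction statement (so the base case is a disjoint union of complete graphs and $G\setminus v$ needs no separate treatment), whereas the paper reduces to the connected case first and handles the cut-vertex situation for $G\setminus v$ by summing over components.
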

\begin{proof}
Without loss of generality,  we may assume that $G$ is connected.	
We prove the assertion by induction
on $\iv(G).$ If $\iv(G)=0,$ then $G$ is a complete graph. Therefore, $c(G) =1$ and the result follows from Eagon-Northcott complex \cite{EN}.
Assume that $\iv(G)>0$ and if $H$ is a chordal graph with $\iv(H) < \iv(G)$, then $\reg(S_H/J_H) \leq c(H)$,
where $S_H=K[x_i,y_i:i \in V(H)]$. Let  $v$ be an internal vertex of $G$. 
It follows from Lemma \ref{ohlemma} that $J_G =J_{G_v} \cap ((x_v,y_v)+J_{G \setminus v}).$

By Corollary \ref{chordal-graph}, $G_v$  is a connected chordal graph. Now, by Lemma \ref{ivlemma}, $\iv(G_v)<\iv(G)$ and hence, by induction, $\reg(S/J_{G_v})\leq c(G_v).$

It follows from Lemma \ref{ivlemma} that $\iv(G \setminus v)<\iv(G)$. If $G \setminus v$ is connected,
then by induction, $\reg(S/((x_v,y_v)+J_{G\setminus v})) \leq c(G\setminus v)$. If $v$ is a cut vertex, 
then let $H_1,\ldots,H_p$ be the connected components of  $G\setminus v$. By induction, $\reg(S_{H_i}/J_{H_i}) \leq c(H_i)$
for each $i$. Therefore, $$\reg(S/((x_v,y_v)+J_{G\setminus v})) = \underset{i \in[p]}{\sum} \reg(S_{H_i}/J_{H_i}) \leq \underset{i \in[p]}{\sum}c(H_i)=c(G \setminus v).$$
	
The graph $G_v \setminus v$ is  an induced subgraph of $G_v$. Therefore, by \cite[Proposition 8]{KM2},
$\reg(S/((x_v,y_v)+J_{G_v \setminus v}))\leq \reg(S/J_{G_v})\leq c(G_v)$.
	
Now, by Corollary \ref{chordal-graph}, $c(G_v)<c(G)$ and by Remark \ref{chordal-graph1}, $c(G\setminus v)\leq c(G)$.
Hence, by applying Lemma \ref{regularity-lemma} in the short exact sequence (\ref{ses1}), we get the desired result.
\end{proof}

 We  recall notation of decomposability from \cite{Rauf}. 
A graph $G$ is called \textit{decomposable}, if there exist subgraphs $G_1$ and $ G_2$ such that $G$ is obtained by identifying 
a free vertex $v_1$ of $G_1$ with a free vertex $v_2$ of $G_2 $, i.e., 
$G= G_1 \cup G_2$ with $V(G_1)\cap V(G_2)=\{v\}$ such that $v$ is a 
free vertex of both $G_1$ and $G_2$.

A graph $G$ is called \textit{indecomposable}, if it is not decomposable. Up to ordering, $G$
has  a unique decomposition into indecomposable subgraphs, i.e., there exist 
$G_1,\ldots,G_r$ indecomposable induced subgraphs of $G$ with 
$G=G_1\cup \cdots \cup G_r$ such that for each $i \neq j$, either 
$V(G_i) \cap V(G_j) = \emptyset$ or $V(G_i) \cap V(G_j) =\{v\}$ and $v$ is a  free vertex 
of both $G_i$ and $G_j$.

It follows from \cite[Theorem 3.1]{JNR} that if $G = G_1 \cup \cdots \cup G_r$ is a decomposition into indecomposable graphs, then
$\reg(S/J_G) = \sum_{i=1}^r \reg(S/J_{G_i})$. Therefore,  we consider indecomposable graphs to study the regularity.

Let $u,v \in V(G)$ be such that $e=\{u,v\} \notin E(G)$, 
then we denote by $G_e$, the graph on the  vertex set $V(G)$ and edge set 
$E(G_e) = E(G) \cup \{\{x,y\} : x,\; y \in N_G(u) \; or \; x,\; y \in N_G(v) \}$. An edge $e$ is said to be a 
\textit{cut edge} if the number of connected components of $G\setminus e$ is  more than  the number of connected components of 
$G$. 

In the following theorem, we give a sufficient condition for a chordal graph whose regularity is not  maximal.

\begin{theorem}\label{chordal-cutedge}
Let $G$ be a connected indecomposable chordal graph on $[n]$. If $G$ has a cut edge, then $\reg(S/J_G) <c(G)$.
\end{theorem}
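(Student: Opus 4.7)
The plan is to apply Ohtani's lemma at $v$ and exploit the fact that the cut-edge structure, together with indecomposability, forces $\cdeg_G(v)\ge 3$: this is one better than the bound used in the proof of Theorem~\ref{chordal-reg} and, via Lemma~\ref{l}, exactly upgrades the non-strict inequality $\reg\le c(G)$ to a strict one.

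Let $G_1,G_2$ denote the connected components of $G\setminus e$ containing $u$ and $v$. Because $e=\{u,v\}$ is a cut edge, $u$ and $v$ share no common neighbor, so $\{u,v\}$ is a maximal clique of $G$. Assuming $|V(G_2)|\ge 2$ (the only exception being $G=K_2$, which we set aside), set $A=G[V(G_1)\cup\{v\}]$ and $B=G_2$; then $V(A)\cap V(B)=\{v\}$ and $v$ is free in $A$, so if $v$ were also free in $G_2$ the graph $G$ would decompose at $v$. Indecomposability therefore forces $\cdeg_{G_2}(v)\ge 2$, and combined with the clique $\{u,v\}$ this gives $\cdeg_G(v)\ge 3$.

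By Lemma~\ref{ohlemma}, $J_G=J_{G_v}\cap((x_v,y_v)+J_{G\setminus v})$, giving the short exact sequence~(\ref{ses1}). Corollary~\ref{chordal-graph} shows that $G_v$ is chordal, and Lemma~\ref{l} with $\cdeg_G(v)\ge 3$ gives $c(G_v)\le c(G)-2$, so Theorem~\ref{chordal-reg} yields $\reg(S/J_{G_v})\le c(G)-2$; then \cite[Proposition 8]{KM2} applied to $G_v\setminus v\subseteq G_v$ gives $\reg(S/((x_v,y_v)+J_{G_v\setminus v}))\le c(G)-2$. The components of $G\setminus v$ are $G_1$ (or the isolated vertex $u$ in the pendant case $G_1=\{u\}$, contributing nothing to regularity) together with the components of $G_2\setminus v$; applying Theorem~\ref{chordal-reg} componentwise and using $c(G_2\setminus v)\le c(G_2)$, one obtains
$$\reg(S/((x_v,y_v)+J_{G\setminus v}))\le c(G_1)+c(G_2\setminus v)\le c(G)-1,$$
using $c(G)=1+c(G_1)+c(G_2)$ (with $c(G_1)$ read as $0$ in the pendant case, where $c(G)=1+c(G_2)$). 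Feeding these three bounds into Lemma~\ref{regularity-lemma}(1) applied to~(\ref{ses1}) yields
$$\reg(S/J_G)\le\max\{c(G)-2,\ c(G)-1,\ (c(G)-2)+1\}=c(G)-1<c(G).$$

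The delicate point is precisely the upgrade from $\cdeg_G(v)\ge 2$ to $\cdeg_G(v)\ge 3$: without the extra unit, Lemma~\ref{l} gives only $c(G_v)\le c(G)-1$ and the argument degenerates into a reproof of Theorem~\ref{chordal-reg} with $\reg(S/J_G)\le c(G)$, not a strict inequality. The cut edge contributes the extra clique $\{u,v\}$, and indecomposability prevents $v$ from being free in $G_2$; together they supply the needed third clique at $v$, after which the proof is Ohtani's lemma combined with the componentwise chordal bound used repeatedly in this section.
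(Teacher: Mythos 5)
Your argument is correct, but it takes a genuinely different route from the paper's. The paper decomposes along the cut edge itself: writing $G\setminus e = H_1 \sqcup H_2$, it observes that indecomposability forces \emph{both} $u$ and $v$ to be internal vertices of $H_1$ and $H_2$ respectively, so that $(G\setminus e)_e = (H_1)_u \sqcup (H_2)_v$ satisfies $c((G\setminus e)_e)\le c(G)-3$ by Corollary \ref{chordal-graph}; it then feeds $\reg(S/J_{G\setminus e})\le c(G)-1$ and $\reg(S/J_{(G\setminus e)_e})\le c(G)-3$ into the edge-deletion bound of \cite[Proposition 2.1(a)]{KM3}. You instead stay with Ohtani's vertex decomposition at the single endpoint $v$, and your new input is the observation that the cut edge contributes the maximal clique $\{u,v\}$ (no common neighbour of $u$ and $v$) while indecomposability prevents $v$ from being free on the $G_2$ side, so $\cdeg_G(v)\ge 3$ and Lemma \ref{l} gives $c(G_v)\le c(G)-2$. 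In effect the paper harvests one unit of savings at each endpoint of $e$, whereas you concentrate three maximal cliques at the one vertex $v$; both proofs then run the same machinery (Theorem \ref{chordal-reg}, \cite[Proposition 8]{KM2}, Lemma \ref{regularity-lemma}) on the resulting pieces. Your route has the mild advantage of avoiding the external inequality from \cite{KM3} and using only the tools already deployed in this section. Two small points of hygiene: the parenthetical ``the only exception being $G=K_2$'' should be phrased as relabelling $u\leftrightarrow v$ to ensure $|V(G_2)|\ge 2$ unless both components of $G\setminus e$ are singletons, i.e.\ $G=K_2$; and excluding $K_2$ is genuinely necessary, since $\reg(S/J_{K_2})=1=c(K_2)$ --- the paper's own proof also silently assumes this, as a one-vertex component has no internal vertex.
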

\begin{proof}
Let $e=\{u,v\}$ be a cut edge of $G$. Let $H_1$ and $H_2$ be connected components of $G\setminus e$ with $u \in V(H_1)$ and $v \in V(H_2)$. 
Since $G$ is an  indecomposable graph, $u$ and $v$ are internal vertices of $H_1$ and $H_2$, respectively. 
Note that $(G\setminus e)_e =(H_1)_u\sqcup (H_2)_v$. By Corollary \ref{chordal-graph}, $(H_1)_u$ and $(H_2)_v$ are  chordal graphs and $c((G\setminus e)_e) =c((H_1)_u)+c((H_2)_v)\leq c(H_1)+c(H_2)-2= c(G)-3$.
Therefore, by Theorem \ref{chordal-reg}, $\reg(S/J_{(G\setminus e)_e}) =\reg(S_{H_1}/J_{(H_1)_u}) +\reg(S_{H_2}/J_{(H_2)_v})\leq c(G)-3$.
Also, $G\setminus e=H_1 \sqcup H_2$ is a chordal graph and
$c(G\setminus e) =c(G)-1$. Therefore, by Theorem \ref{chordal-reg}, 
$\reg(S/J_{G\setminus e}) =\reg(S_{H_1}/J_{H_1})+\reg(S_{H_2}/J_{H_2})\leq c(G)-1$. It follows from  Lemma \cite[Proposition 2.1(a)]{KM3}   that $\reg(S/J_G) < c(G)$.
\end{proof}
The following example illustrates that the assumption in Theorem
\ref{chordal-cutedge}, that $G$ contains a cut edge, is not a
necessary condition.  First, we recall the definition of join of graphs.  Let $H$ and $H'$ be two graphs  with the vertex sets $[p]$ and $[q]$, respectively. The \textit{join} of $H$ and $H'$, denoted by $H*H'$, 
is the graph with vertex set $[p] \sqcup [q]$ and the edge set $E(H*H')= E(H) \cup E(H') \cup \{\{i,j\}| i \in [p], j \in [q]\}$.
\begin{example}
For $n \geq 3$, let  $G=P_2 * K_n^c$, where $K_n^c$ is a graph on $n$ vertices and has no edges. Note that $G$ is an indecomposable chordal graph on $n+2$ vertices. Observe that $c(G) =n$ and $G$ has no cut edge. It follows from  \cite[Theorem 2.1]{MK2018} that $\reg(S/J_G) =\max \{\reg(S/J_{P_2}), 2\} =2$.
\end{example}

\section{Regularity of binomial edge ideals of Jahangir graphs}
 In this section, we obtain the regularity of binomial edge ideals of Jahangir graphs.   First, we recall the definition of Jahangir graph. 
\begin{definition}
The Jahangir graph denoted by $J_{m,n}$ is a graph  on the vertex set $[mn+1]$, for $m \geq 1$ and $n \geq 3$, such that the induced
subgraph on $[mn]$ is  $C_{mn}$ and the neighborhood of vertex $mn+1$ is $\{1,m+1,\ldots,m(n-1)+1\}$.
\end{definition}

\begin{example} We give illustrations of  $J_{1,8}$ and $J_{2,4}$ below.
	
	\begin{minipage}{\linewidth}
		\begin{minipage}{.45\linewidth}
			\captionsetup[figure]{labelformat=empty}
			\begin{figure}[H]
				\begin{tikzpicture}[scale=1]
				\draw (0.08,3.52)-- (1,4);
				\draw (2,3.58)-- (1,4);
				\draw (2,3.58)-- (2.54,2.5);
				\draw (2.54,2.5)-- (2.04,1.48);
				\draw (2.04,1.48)-- (1,1);
				\draw (1,1)-- (0.04,1.46);
				\draw (0.04,1.46)-- (-0.38,2.46);
				\draw (-0.38,2.46)-- (0.08,3.52);
				\draw (1.04,2.44)-- (0.08,3.52);
				\draw (1.04,2.44)-- (0.04,1.46);
				\draw (1.04,2.44)-- (2,3.58);
				\draw (1.04,2.44)-- (2.04,1.48);
				\draw (1.04,2.44)-- (1,1);
				\draw (-0.38,2.46)-- (1.04,2.44);
				\draw (1.04,2.44)-- (1,4);
				\draw (1.04,2.44)-- (2.54,2.5);
				\begin{scriptsize}
				\fill (1,4) circle (1.5pt);
				\draw (1.14,4.26) node {$1$};
				\fill (0.08,3.52) circle (1.5pt);
				\draw (-0.1,3.86) node {$2$};
				\fill (2,3.58) circle (1.5pt);
				\draw (2.16,3.84) node {$8$};
				\fill (0.04,1.46) circle (1.5pt);
				\draw (-0.14,1.3) node {$4$};
				\fill (2.04,1.48) circle (1.5pt);
				\draw (2.22,1.38) node {$6$};
				\fill (1,1) circle (1.5pt);
				\draw (1.2,0.88) node {$5$};
				\fill (2.54,2.5) circle (1.5pt);
				\draw (2.7,2.76) node {$7$};
				\fill (-0.38,2.46) circle (1.5pt);
				\draw (-0.58,2.58) node {$3$};
				\fill (1.04,2.44) circle (1.5pt);
				\draw (1.16,2.1) node {$9$};
				\end{scriptsize}
				\end{tikzpicture}
				\caption{$J_{1,8}$}
			\end{figure}
		\end{minipage}
		\begin{minipage}{.45\linewidth}	
			\captionsetup[figure]{labelformat=empty}
			\begin{figure}[H]
				\begin{tikzpicture}[scale=1]
				\draw (0.56,2.98)-- (1,4);
				\draw (1.44,2.98)-- (1,4);
				\draw (1.44,2.98)-- (2.52,2.48);
				\draw (2.52,2.48)-- (1.46,2);
				\draw (1.46,2)-- (1,1);
				\draw (1,1)-- (0.58,2);
				\draw (0.58,2)-- (-0.38,2.46);
				\draw (-0.38,2.46)-- (0.56,2.98);
				\draw (1.04,2.44)-- (0.56,2.98);
				\draw (1.04,2.44)-- (0.58,2);
				\draw (1.04,2.44)-- (1.44,2.98);
				\draw (1.04,2.44)-- (1.46,2);
				\begin{scriptsize}
				\fill (1,4) circle (1.5pt);
				\draw (1.24,4) node {$2$};
				\fill (0.56,2.98) circle (1.5pt);
				\draw (0.38,3.32) node {$3$};
				\fill (1.44,2.98) circle (1.5pt);
				\draw (1.6,3.24) node {$1$};
				\fill (0.58,2) circle (1.5pt);
				\draw (0.4,1.84) node {$5$};
				\fill (1.46,2) circle (1.5pt);
				\draw (1.64,1.9) node {$7$};
				\fill (1,1) circle (1.5pt);
				\draw (1.28,1.1) node {$6$};
				\fill (2.52,2.48) circle (1.5pt);
				\draw (2.68,2.74) node {$8$};
				\fill (-0.38,2.46) circle (1.5pt);
				\draw (-0.5,2.84) node {$4$};
				\fill (1.04,2.44) circle (1.5pt);
				\draw (1.06,2.18) node {$9$};
				\end{scriptsize}
				\end{tikzpicture}
				\caption{$J_{2,4}$}
			\end{figure}
		\end{minipage}
	\end{minipage}
\end{example}
 One can note that if $m >1$, then $J_{m,n}$ is not a chordal graph and $c(J_{m,n})=n(m+1)$. 
 It follows from \cite[Theorem 3.2]{KM3} that $$\reg(S/J_{J_{m,n}}) \leq mn-1<c(J_{m,n}).$$ 
 Since $l(J_{m,n})=mn-2$, by \cite[Theorem 1.1]{MM}, $mn-2 \leq \reg(S/J_{J_{m,n}})$.  If $m=1$, then $J_{1,n}$ is the wheel graph $W_n$ and hence, by virtue of \cite[Theorem 2.1]{MK2018}, $\reg(S/J_{J_{1,n}})=n-2$. 
 We now prove the same for $m\geq 2$.	
 First, we recall a result due to Ene et al. from \cite{ERT19} which is useful to compute the regularity of  binomial edge ideals of Jahangir graphs.
 
\begin{theorem}\cite[Theorem 2.1]{ERT19}\label{new-up}
Let $G$ be a connected graph on $[n]$. Then, $\reg(S/J_G) \leq n-\omega(G)
+1$.
\end{theorem}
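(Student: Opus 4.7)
The plan is to prove the bound by a double induction: an outer induction on $n = |V(G)|$ and, for each fixed $n$, an inner induction on the quantity $n - \omega(G)$. The innermost base case is $n - \omega(G) = 0$, i.e.\ $G = K_n$; here $J_G$ is the ideal of $2 \times 2$ minors of a generic $2 \times n$ matrix, and the Eagon--Northcott complex gives $\reg(S/J_G) = 1 = n - \omega(G) + 1$, matching the bound.

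For the inductive step, assume $\omega = \omega(G) < n$ and let $K$ be a maximum clique. Since $G$ is connected and $K \neq V(G)$, there exists an edge $\{v, u\}$ with $v \in K$ and $u \notin K$. This vertex $v$ belongs to at least two distinct maximal cliques (namely $K$ and a maximal clique through $\{v,u\}$), so $v$ is internal. Apply Ohtani's Lemma \ref{ohlemma} to obtain the short exact sequence (\ref{ses1}) associated with $v$, and then bound each of the three terms appearing in it.

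For $S/J_{G_v}$, note that $N_G[v] \supseteq K \cup \{u\}$ becomes a clique in $G_v$, so $\omega(G_v) \geq \omega + 1$ while $|V(G_v)| = n$; the inner induction then gives $\reg(S/J_{G_v}) \leq n - \omega(G_v) + 1 \leq n - \omega$. Since $G_v \setminus v$ is an induced subgraph of $G_v$, \cite[Proposition 8]{KM2} yields $\reg(S/((x_v,y_v) + J_{G_v\setminus v})) \leq \reg(S/J_{G_v}) \leq n - \omega$. For $S/((x_v,y_v) + J_{G\setminus v})$, each connected component of $G \setminus v$ has strictly fewer than $n$ vertices, so the outer induction applies componentwise; noting that the component containing $K \setminus v$ has clique number at least $\omega - 1$ while the remaining $p-1$ components each contribute clique number at least $1$, a short accounting
\[
\reg(S/J_{G\setminus v}) \leq \sum_i (|V(G_i)| - \omega(G_i) + 1) \leq (n-1) - ((\omega-1) + (p-1)) + p = n - \omega + 1
\]
gives the required bound on that term. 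Plugging these three estimates into Lemma \ref{regularity-lemma}(1) applied to (\ref{ses1}) yields $\reg(S/J_G) \leq n - \omega + 1$.

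The main delicate point is arranging the choice of $v$ so that it lies in a maximum clique \emph{and} has a neighbor outside it; this is exactly what forces $\omega(G_v) > \omega(G)$ and drives the inner induction, and it is the only step where connectedness of $G$ is essential. The secondary subtlety is the componentwise accounting for the possibly disconnected graph $G \setminus v$, which is fortunately straightforward once one keeps track of which component inherits the clique $K \setminus v$.
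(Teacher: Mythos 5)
This statement is quoted in the paper from \cite[Theorem 2.1]{ERT19} without proof, so there is no internal argument to compare against; your proposal has to stand on its own, and it does. The double induction (outer on $n$, inner on $n-\omega(G)$) is set up correctly: the chosen vertex $v$ lies in the maximum clique $K$ and in a second maximal clique through $u\notin K$, so it is internal and Lemma \ref{ohlemma} applies; $G_v$ is connected on the same vertex set with $\omega(G_v)\geq\omega+1$, so the inner induction handles $S/J_{G_v}$ and, via \cite[Proposition 8]{KM2}, the third term of (\ref{ses1}); and the componentwise accounting for $G\setminus v$, using that exactly one component absorbs the clique $K\setminus\{v\}$ of size $\omega-1$ while the others contribute clique number at least $1$, correctly yields $n-\omega+1$. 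Two cosmetic points: the base-case equality $\reg(S/J_{K_n})=1$ fails for $n=1$ (where the regularity is $0$), though the inequality you actually need still holds; and you should note explicitly that $\omega\geq 2$ in the inductive step (so $K\setminus\{v\}\neq\emptyset$ and sits in a single component), which follows since $G$ is connected with at least one edge. Neither affects correctness. This appears to be essentially the argument of Ene, Rinaldo and Terai themselves, who also induct through Ohtani's exact sequence at a vertex of a maximum clique, so you have in effect reconstructed the cited proof rather than found a new route.
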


We now obtain the  regularity of binomial edge ideals of Jahangir graphs.  To compute the regularity of $J_{m,n}$, we need the following lemma. The idea of this lemma is based on \cite[Lemma 3.3]{ERT19}.
\begin{lemma}\label{new-upp}
	Let $G$ be a connected graph on $[n]$. If $G$ has an internal vertex $v$ such that $\deg_G(v) \geq 4$ and $G\setminus v$ is not a path graph on $n-1$ vertices,  then $\reg(S/J_G) \leq n-3$.
\end{lemma}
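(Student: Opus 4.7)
The plan is to apply Ohtani's lemma (Lemma \ref{ohlemma}) at the internal vertex $v$ and control each of the three terms appearing in the short exact sequence (\ref{ses1}) separately; the hypothesis $\deg_G(v)\geq 4$ feeds into Theorem \ref{new-up} via a large clique, while the hypothesis ``$G\setminus v$ is not a path'' ensures the Matsuda--Murai bound is strict on $G\setminus v$.

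First I would deal with $S/J_{G_v}$. Since $N_G(v)$ becomes a clique in $G_v$, the set $N_G(v)\cup\{v\}$ is a clique of size at least $5$, so $\omega(G_v)\geq 5$. Because $G$ is connected, so is $G_v$, and Theorem \ref{new-up} gives
\[
\reg(S/J_{G_v})\leq n-\omega(G_v)+1\leq n-4.
\]
Next I would handle $S/((x_v,y_v)+J_{G_v\setminus v})$, which is isomorphic to $S'/J_{G_v\setminus v}$ where $S'=K[x_i,y_i:i\neq v]$ is a polynomial ring in $2(n-1)$ variables. The vertex $v$ being connected to every other vertex through $G$ (via a path that must hit $N_G(v)$), and $N_G(v)$ forming a clique in $G_v\setminus v$, shows that $G_v\setminus v$ is connected; it also contains $N_G(v)$ as a clique of size at least $4$, so
\[
\reg(S/((x_v,y_v)+J_{G_v\setminus v}))\leq (n-1)-\omega(G_v\setminus v)+1\leq n-4.
\]

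The main obstacle is the middle term $S/((x_v,y_v)+J_{G\setminus v})\cong S'/J_{G\setminus v}$, because $G\setminus v$ need not be connected and Theorem \ref{new-up} does not apply directly. I would break into connected components $H_1,\ldots,H_p$ of $G\setminus v$ with $|V(H_i)|=n_i$ and $\sum n_i=n-1$. Since the binomial edge ideal factors over components,
\[
\reg(S'/J_{G\setminus v})=\sum_{i=1}^{p}\reg(S_{H_i}/J_{H_i})\leq \sum_{i=1}^{p}(n_i-1)=n-1-p,
\]
using the Matsuda--Murai upper bound on each component. If $p\geq 2$ this already gives $\leq n-3$. If $p=1$, the bound yields only $\leq n-2$, with equality, by the Kiani--Saeedi Madani resolution of the Matsuda--Murai conjecture, occurring precisely when $G\setminus v$ is the path $P_{n-1}$; this case is excluded by hypothesis, so $\reg(S'/J_{G\setminus v})\leq n-3$ in either case.

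Finally I would feed these three bounds into Lemma \ref{regularity-lemma}(1) applied to the short exact sequence (\ref{ses1}):
\[
\reg(S/J_G)\leq \max\bigl\{\reg(S/J_{G_v}),\,\reg(S/((x_v,y_v)+J_{G\setminus v})),\,\reg(S/((x_v,y_v)+J_{G_v\setminus v}))+1\bigr\}\leq n-3,
\]
which is exactly the claim. The only real content beyond bookkeeping is the observation that $G_v\setminus v$ remains connected (so Theorem \ref{new-up} applies to it) and the excluded-path case analysis for $G\setminus v$.
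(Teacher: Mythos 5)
Your proposal is correct and follows essentially the same route as the paper: apply Lemma \ref{ohlemma} at $v$, bound $\reg(S/J_{G_v})$ and $\reg(S/((x_v,y_v)+J_{G_v\setminus v}))$ by $n-4$ via Theorem \ref{new-up} using the cliques $N_G(v)\cup\{v\}$ and $N_G(v)$, bound the middle term by $n-3$ using the Kiani--Saeedi Madani characterization of when the Matsuda--Murai bound is attained, and conclude with Lemma \ref{regularity-lemma} applied to the sequence \eqref{ses1}. Your extra steps (checking connectivity of $G_v\setminus v$ and splitting $G\setminus v$ into components) are just careful unpackings of facts the paper invokes directly, not a different argument.
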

\begin{proof}
	Since $v$ is an internal vertex, by Lemma \ref{ohlemma}, $J_G = J_{G_v} \cap ((x_v,y_v)+J_{G \setminus v})$. 
	Note that  $\omega(G_v) \geq 5$ and $\omega(G_v \setminus v) \geq 4$ as $\deg_G(v) \geq 4$. By Theorem \ref{new-up}, $\reg(S/J_{G_v}) \leq n-\omega(G_v)+1 \leq n-4$ and $\reg(S/((x_v,y_v)+J_{G_v \setminus v})) \leq (n-1)-\omega(G_v \setminus v)+1 \leq n-4$.  Since $G\setminus v$ is not a path graph, it follows from \cite[Theorem 3.2]{KM3} that $\reg(S/((x_v,y_v)+J_{G\setminus v})) \leq (n-1)-2=n-3$. Hence, the assertion follows from  Lemma \ref{regularity-lemma} and the short exact sequence \eqref{ses1}.
\end{proof}

We conclude this section by computing the regularity of binomial edge ideals of  Jahangir graphs.
\begin{theorem}
	 For $m \geq 2 \text{ and } n \geq 3$, let $G=J_{m,n}$. Then, $\reg(S/J_G) =mn-2$.
\end{theorem}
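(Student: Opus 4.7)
The lower bound $\reg(S/J_G) \geq mn - 2$ has already been recorded via \cite[Theorem 1.1]{MM}, so the task reduces to proving $\reg(S/J_G) \leq mn - 2$. My plan is to split according to whether $n \geq 4$ or $n = 3$: in the first case Lemma \ref{new-upp} applies directly to $G$, while the second case requires one application of Lemma \ref{ohlemma} and then a second application of Lemma \ref{new-upp} to a derived graph.

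Assume first that $n \geq 4$, and take $v = mn + 1$ (the central hub). Since $m \geq 2$, the spoke endpoints $1, m+1, \ldots, (n-1)m + 1$ are pairwise non-adjacent in $J_{m,n}$, so $v$ lies in $n$ distinct maximal cliques and is internal, with $\deg_G(v) = n \geq 4$. Moreover $G \setminus v = C_{mn}$ is a cycle, hence not a path. Since $|V(G)| = mn + 1$, Lemma \ref{new-upp} yields $\reg(S/J_G) \leq (mn + 1) - 3 = mn - 2$.

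Now suppose $n = 3$. Then the hub has degree only $3$, so I use Lemma \ref{ohlemma} with $v = 3m + 1$ and work with the short exact sequence (\ref{ses1}). The graph $G_v$ contains the clique $\{1, m+1, 2m+1, 3m+1\}$ of size $4$, so by Theorem \ref{new-up}, $\reg(S/J_{G_v}) \leq (3m + 1) - 4 + 1 = 3m - 2$. Also $G \setminus v = C_{3m}$ with $3m \geq 6$, so by \cite[Corollary 16]{SZ}, $\reg(S/J_{G \setminus v}) = 3m - 2$. The crucial estimate is for the third term: let $H = G_v \setminus v$, which is the cycle $C_{3m}$ together with the added triangle on $\{1, m+1, 2m+1\}$. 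Vertex $1$ is internal in $H$ and $\deg_H(1) = 4$, while $H \setminus \{1\}$ is the path $2 - 3 - \cdots - 3m$ with the extra chord $\{m+1, 2m+1\}$, hence not a path on $3m - 1$ vertices. Applying Lemma \ref{new-upp} to $H$ gives $\reg(S/J_H) \leq 3m - 3$. Feeding the three bounds into Lemma \ref{regularity-lemma}(1) applied to (\ref{ses1}) produces $\reg(S/J_G) \leq \max\{3m - 2,\, (3m - 3) + 1\} = 3m - 2 = mn - 2$.

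The main obstacle is the $n = 3$ case, where the extra ``$-1$'' has to be extracted from the third term in (\ref{ses1}). A direct application of Theorem \ref{new-up} to $H$ would only give $\reg(S/J_H) \leq 3m - 2$ and so only recover the weaker bound $\reg(S/J_G) \leq mn - 1$ already known from \cite[Theorem 3.2]{KM3}; the improvement comes precisely from the second, nested use of Lemma \ref{new-upp} on the chorded cycle $H$.
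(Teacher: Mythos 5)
Your proof is correct and follows essentially the same route as the paper: Lemma \ref{new-upp} applied directly to the hub when $n\geq 4$, and for $n=3$ the exact sequence (\ref{ses1}) at the hub combined with Theorem \ref{new-up} for $G_v$, \cite[Corollary 16]{SZ} for $G\setminus v=C_{3m}$, and a nested application of Lemma \ref{new-upp} to $H=G_v\setminus v$ at the vertex $1$. The only (harmless) discrepancy is that you correctly compute the Theorem \ref{new-up} bound for $G_v$ as $(3m+1)-4+1=3m-2$, where the paper writes $3m-3$; either value suffices since that term never needs the extra $-1$.
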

\begin{proof}
Since $l(G)=mn-2$, by \cite[Theorem 1.1]{MM}, $mn-2 \leq \reg(S/J_{G})$.  Therefore, it is enough to prove that $\reg(S/J_G) \leq mn-2$. Set $v=mn+1$. Observe that $v$ is an internal vertex of $G$. If $n \geq 4$, then $\deg_G(v)=n \geq 4$ and $G\setminus v$ is not a path graph on $mn$ vertices. Thus, by Lemma \ref{new-upp}, $\reg(S/J_G) \leq mn-2$.

Now, we assume that $n=3$. By Lemma \ref{ohlemma}, $J_G = J_{G_v} \cap ((x_v,y_v)+J_{G \setminus v})$. 
Note that $\omega(G_v) = 4$. Hence, by virtue of Theorem
\ref{new-up}, $\reg(S/J_{G_v})\leq (3m+1)-4+1 =3m-3$. Set $H =G_v \setminus v$. Then, $H$ is a connected graph on $3m$ vertices, $\deg_H(1) =4$ and $H \setminus 1$ is not a path graph on $3m-1$ vertices. Thus, by Lemma \ref{new-upp}, $\reg(S/((x_v,y_v)+J_{H})) \leq 3m-3$. It is clear that
$G \setminus v =C_{3m}$, thus  by \cite[Corollary 16]{SZ}, $\reg(S/((x_v,y_v)+J_{G \setminus v})) =3m-2$. By virtue of Lemma \ref{regularity-lemma} and the short exact sequence \eqref{ses1}, we get $\reg(S/J_G) \leq 3m-2$.
\end{proof}

\section{Hibi-Matsuda Conjecture}
Let $M =\underset{k \in \mathbb{N}} \bigoplus M_k$ be a finite graded $S$-module of Krull dimension $d$.
The  function $H_M : \mathbb{N}  \rightarrow \mathbb{N}$ defined as $H_M(k) = l(M_k)$ is called the Hilbert function of $M$.
The Hilbert series of $M$ is the generating series of the Hilbert function $H_M$ and is denoted by
$\Hilb_{M}(t)= \underset{k \in \mathbb{N}} \sum l(M_k) t^k$. By  \cite[Corollary 4.1.8]{bh}, there
exists a unique polynomial $h_M(t) \in \mathbb{Z}[t]$ such that  $h_M(1)\neq 0$ and $\Hilb_M(t) =h_M(t)/(1-t)^d$. The polynomial $h_M(t)$ is called the $h$-polynomial of $M$.

In this section, we give a sufficient condition for  Conjecture  \ref{Hibi-con} to
be  true. We prove that Conjecture \ref{Hibi-con} holds if
$S/J_G$ admits a unique extremal Betti number. In \cite{PZ}, Schenzel
and Zafar proved that complete bipartite graphs have unique extremal
Betti number. Zafar and Zahid proved that the $n$-cycle, $C_n$, has a  unique extremal Betti number, see \cite{SZ}.  In \cite{her2}, Herzog and Rinaldo characterized block graphs which admit unique 
extremal Betti number. In \cite{AR2}, we have  characterized generalized block graphs which admit unique extremal Betti number. 
\begin{theorem}\label{unique-extremal}
	Let $G$ be a connected graph on $[n]$. If $S/J_G$ admits a unique extremal Betti number, then $\reg(S/J_G) \leq \deg h_{S/J_G}(t)$.
\end{theorem}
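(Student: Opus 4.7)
The plan is to connect the unique extremal Betti number of $S/J_G$ with the degree of the numerator polynomial of its Hilbert series, and then to compare that degree with $\reg(S/J_G)$ via Auslander--Buchsbaum. Writing $p=\pd(S/J_G)$ and $r=\reg(S/J_G)$, I will first argue that the unique extremal Betti number must sit at the corner $(p,p+r)$. The point is that the last nonzero column of the Betti table always contains an extremal entry (take the topmost nonzero $\beta_{p,p+l}$; nothing lies strictly below-right of it since $i=p$ is maximal and $l$ is chosen maximal), and symmetrically the last nonzero row always supplies an extremal entry; when there is only one extremal Betti number, both of these forced extremal positions must coincide, pinning it at $(p,p+r)$.

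The second step is to study the Hilbert numerator $K_{S/J_G}(t)=\sum_{i,j}(-1)^i\beta_{i,j}(S/J_G)\,t^j$, defined by $\Hilb_{S/J_G}(t)=K_{S/J_G}(t)/(1-t)^{2n}$, and to show that $\deg K_{S/J_G}(t)=p+r$. For any nonzero Betti number one has $i\leq p$ and $j-i\leq r$, hence $j\leq p+r$ with equality forcing $(i,j)=(p,p+r)$. The coefficient of $t^{p+r}$ in $K_{S/J_G}(t)$ is therefore $(-1)^{p}\beta_{p,p+r}(S/J_G)$, nonzero by extremality. No cancellation occurs at the top degree, so $\deg K_{S/J_G}(t)=p+r$ exactly.

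To finish, I will compare the two forms of the Hilbert series: $K_{S/J_G}(t)/(1-t)^{2n}=h_{S/J_G}(t)/(1-t)^{d}$ with $d=\dim(S/J_G)$ gives $h_{S/J_G}(t)=K_{S/J_G}(t)/(1-t)^{c}$, where $c=2n-d=\codim(S/J_G)$. Because $h_{S/J_G}(1)\neq 0$ by definition, $(1-t)^{c}$ divides $K_{S/J_G}(t)$ exactly, whence $\deg h_{S/J_G}(t)=p+r-c$. An application of Auslander--Buchsbaum, $p=2n-\depth(S/J_G)\geq 2n-d=c$, then yields $\deg h_{S/J_G}(t)=r+(p-c)\geq r=\reg(S/J_G)$.

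The main obstacle is the identification $\deg K_{S/J_G}(t)=p+r$, which is exactly where the uniqueness of the extremal Betti number is used. Without uniqueness the corner $(p,p+r)$ need not correspond to a nonzero Betti number, so the coefficient of $t^{p+r}$ in $K_{S/J_G}(t)$ can vanish and the honest degree of $K_{S/J_G}(t)$ can drop below $p+r$; this is precisely the loophole responsible for the counterexamples to Conjecture \ref{Hibi-con} alluded to in the introduction. The remaining steps are routine manipulations of the Hilbert series together with a standard application of Auslander--Buchsbaum.
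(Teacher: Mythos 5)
Your proof is correct. The core identity is the same as in the paper: the unique extremal Betti number is forced to sit at $(p,p+r)$, so the numerator $K_{S/J_G}(t)=\sum_{i,j}(-1)^i\beta_{i,j}t^j$ has degree exactly $p+r$ (no cancellation, since $\beta_{i,p+r}=0$ for $i\neq p$), and comparing with $h_{S/J_G}(t)(1-t)^{2n-d}$ gives $\deg h_{S/J_G}(t)=p+r-(2n-d)$. In fact you spell out two points the paper leaves implicit, namely why uniqueness pins the extremal Betti number at the corner $(p,p+r)$ and why the top coefficient survives. Where you genuinely diverge is the final inequality $p\geq 2n-d$: the paper derives it from graph-specific inputs, namely $p\geq n+\kappa(G)-2\geq n-1$ (a vertex-connectivity lower bound on projective dimension) together with $\dim(S/J_G)\geq n+1$ from the primary decomposition, whereas you get it in one line from Auslander--Buchsbaum combined with $\depth(S/J_G)\leq\dim(S/J_G)$. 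Your route is cleaner and strictly more general: it shows that $\reg(M)\leq\deg h_M(t)$ for \emph{any} finitely generated graded module with a unique extremal Betti number, with no reference to binomial edge ideals at all; the paper's route, by contrast, records the sharper numerical facts $p\geq n-1$ and $d\geq n+1$ that are specific to this class of ideals but are not actually needed for the conclusion.
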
		
\begin{proof}
Set $p=\pd(S/J_G)$ and $r=\reg(S/J_G)$. If $G$ is not a complete graph, then by \cite[Theorem B]{AN2017},
$p \geq n+\kappa(G)-2$, where $\kappa(G)$ is the vertex connectivity of $G$. If $G$ is a complete graph, then $p =n-1$.
Note that for any connected graph $G$ if $G \neq K_n$, then $\kappa(G) \geq 1$. Therefore, for any connected graph $G$,  $ p \geq n-1$. Also, 
it follows from \cite[Corollary 3.3]{HH1} that  $\dim(S/J_G) \geq n+1$. Now, by \cite[Corollary 4.1.14]{bh}, 
$$\underset{i,j}\sum(-1)^i \beta_{i,j}^S(S/J_G)t^j =h_{S/J_G}(t)(1-t)^{2n-d},$$ where $d=\dim(S/J_G)$.    
Since $S/J_G$ has the unique extremal Betti number $\beta_{p,p+r}^S(S/J_G)$, we get $p+r =2 n-d + \deg h_{S/J_G}(t)$. Hence, the assertion follows.
\end{proof}
It is natural to ask if Conjecture \ref{Hibi-con} is true when $S/J_G$ has more than one extremal Betti numbers.
Here is one instance when   $S/J_G$ has two extremal Betti number and Conjecture \ref{Hibi-con} is true. 
A \textit{flower} graph $F_{h,k}(v)$ is a connected graph obtained  by identifying a free vertex as $v$, 
each of $h$ copies of the complete graph $K_3$ and $k$ copies of the star graph $K_{1,3}$ with $h +k \geq 3$. The flower graph was introduced
by Mascia and Rinaldo in \cite{CarlaR2018}. It follows from \cite[Theorem 3.4]{CarlaR2018} that  $F_{h,k}(v)$ has two extremal Betti numbers.

\begin{theorem}
	Let $G=F_{h,k}(v)$ be a flower graph. Then, $\reg(S/J_G) \leq \deg h_{S/J_G}(t)$.
\end{theorem}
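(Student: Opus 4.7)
The plan is to extend the argument of Theorem~\ref{unique-extremal} to the present situation, in which \cite[Theorem 3.4]{CarlaR2018} asserts that $S/J_G$ has two rather than one extremal Betti numbers. I would begin by labeling these two extremal positions as $(p_1,\,p_1+r_1)$ and $(p_2,\,p_2+r_2)$, ordered so that $p_1<p_2=\pd(S/J_G)$ and $r_2<r_1=\reg(S/J_G)$; the explicit values of these positions are furnished by \cite[Theorem 3.4]{CarlaR2018} in terms of $h$, $k$, and $n=|V(G)|$.

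Next, I would apply the identity
$$\sum_{i,j}(-1)^i\beta^S_{i,j}(S/J_G)\,t^j \;=\; h_{S/J_G}(t)\,(1-t)^{2n-d}$$
from \cite[Corollary 4.1.14]{bh}, with $d=\dim(S/J_G)$, and examine the coefficient of $t^{\max(p_1+r_1,\,p_2+r_2)}$ on the left. Because every nonzero $\beta^S_{i,j}(S/J_G)$ satisfies $i\leq p_2$ and $j-i\leq r_1$, the potentially contributing indices at this critical degree form a short interval around the relevant extremal position. Using the explicit Betti diagram of $F_{h,k}(v)$ given in \cite[Theorem 3.4]{CarlaR2018}, I would check that the intermediate entries on this column all vanish, so the coefficient is a nonzero multiple of the extremal entry sitting at the maximum. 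This yields the equality $\max(p_1+r_1,\,p_2+r_2) = \deg h_{S/J_G}(t)+2n-d$.

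Finally, combining this with $d\geq n+1$ from \cite[Corollary 3.3]{HH1}, with $p_2=\pd(S/J_G)\geq n-1$ from \cite[Theorem B]{AN2017}, and with the explicit lower bound on $p_1$ read off from the formulas in \cite[Theorem 3.4]{CarlaR2018}, a short arithmetic manipulation exactly parallel to the end of the proof of Theorem~\ref{unique-extremal} delivers $\reg(S/J_G)=r_1\leq \deg h_{S/J_G}(t)$.

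The main obstacle will be the non-cancellation verification in the middle step. A priori, Betti numbers sitting strictly between the two extremal positions on the critical column could produce a cancellation in the alternating sum and push down the degree of the left-hand side; ruling this out relies on the full description of the Betti table of $F_{h,k}(v)$ from \cite[Theorem 3.4]{CarlaR2018}. Once that bookkeeping is in place, the rest of the argument is a direct adaptation of Theorem~\ref{unique-extremal}.
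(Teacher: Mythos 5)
Your outline follows the same route as the paper: extract the degree of $\sum_{i,j}(-1)^i\beta_{i,j}(S/J_G)\,t^j=h_{S/J_G}(t)(1-t)^{2n-d}$ from the extremal Betti data of \cite[Theorem 3.4]{CarlaR2018} and then do arithmetic with lower bounds on homological degrees and on $d$. A side remark on your middle step: the non-cancellation worry is essentially vacuous, since any nonzero $\beta_{i,j}$ with $j$ equal to the maximal total degree is automatically an extremal Betti number, so the only conceivable cancellation is between the two extremal positions themselves, and only in the degenerate case $p_1+r_1=p_2+r_2$ with $p_1,p_2$ of opposite parity and equal Betti numbers.

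The genuine gap is in your closing arithmetic. The extremal Betti number that realizes the regularity sits at homological degree $p_1<p_2=\pd(S/J_G)$, so the bound $\pd(S/J_G)\ge n-1$ tells you nothing about it, and the only dimension bound you invoke, $d\ge n+1$, is too weak. Concretely, for $G=F_{h,k}(v)$ one has $\iv(G)=k+1$, $\cdeg_G(v)=h+k$, $\reg(S/J_G)=\iv(G)+\cdeg_G(v)-1=h+2k$ by \cite[Corollary 3.5]{CarlaR2018}, and $\max(p_1+r_1,p_2+r_2)=n+\iv(G)=n+k+1$; hence $p_1\le (n+k+1)-(h+2k)=n-h-k+1\le n-2$. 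Your inequality $r_1\le \deg h_{S/J_G}(t)+2n-d-p_1$ combined with $d\ge n+1$ therefore only yields $r_1\le \deg h_{S/J_G}(t)+h+k-2$, which is strictly weaker than the claim because $h+k\ge 3$. The missing ingredient is the sharper estimate $d\ge n+\cdeg_G(v)-1=n+h+k-1$, which comes from the minimal prime $P_T(G)$ with $T=\{v\}$ (here $c_G(\{v\})=h+k=\cdeg_G(v)$ since $v$ is the cut vertex joining all the petals). This is precisely how the paper closes the argument, paired with the explicit regularity formula above rather than with lower bounds on $p_1$; once you add that dimension bound, your computation does go through.
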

\begin{proof}
It follows from \cite[Corollary 4.1.14]{bh} that $$\underset{i,j}\sum(-1)^i \beta_{i,j}(S/J_G)t^j =h_{S/J_G}(t)(1-t)^{2n-d},$$
where $d =\dim(S/J_G)$.
By \cite[Theorem 3.4]{CarlaR2018}, the degree of the polynomial on the left hand side of the above equation is $n+\iv(G)$.
Therefore, by comparing the degree, we get $n+\iv(G) =2n-d + \deg h_{S/J_G}(t)$. It follows from \cite[Corollary 3.5]{CarlaR2018} 
that $\reg(S/J_G)=\iv(G)+\cdeg(v)-1 =n -d+\deg h_{S/J_G}(t)+\cdeg(v)-1$. Let $T=\{v\}$. Note that, $c_G(T) = \cdeg(v)$. 
By \cite[Corollary 3.3]{HH1},  $d \geq n+\cdeg(v)-1$. Thus, we get $\reg(S/J_G) \leq \deg h_{S/J_G}(t)$.
\end{proof}	
Now, we provide a counterexample to Hibi-Matsuda conjecture. In
\cite{KK19}, Kahle and Kr\"usemann gave a counterexample to
Hibi-Matsuda conjecture. However, one can observe that the counterexample given in \cite{KK19} is not a chordal graph. Here, we provide a counterexample which is a block graph and hence, a chordal graph. 
\begin{example}The following graph is a counterexample to Hibi-Matsuda Conjecture.
		
\captionsetup[figure]{labelformat=empty}		
\begin{figure}[H]
		\begin{tikzpicture}[scale=1]
		\draw  (-3,2)-- (-1,2);
		\draw  (-1,2)-- (-2,1);
		\draw  (-2,1)-- (-3,2);
		\draw  (-3,2)-- (-3.01,3.44);
		\draw  (-3.01,3.44)-- (-4.63,2);
		\draw  (-4.63,2)-- (-3,2);
		\draw  (-3,2)-- (-3.03,0.6);
		\draw  (-3,2)-- (-4.59,0.58);
		\draw  (-4.59,0.58)-- (-3.03,0.6);
		\draw  (-2,1)-- (-2.59,0);
		\draw  (-2,1)-- (-1.43,0);
		\draw  (-1,2)-- (-1.01,3.4);
		\draw  (-1.01,3.4)-- (0.67,2);
		\draw  (0.67,2)-- (-1,2);
		\draw (-1,2)-- (-1.01,0.56);
		\draw  (-1.01,0.56)-- (0.61,0.58);
		\draw  (0.61,0.58)-- (-1,2);
		\begin{scriptsize}
		\draw  (-3,2) circle (1.5pt);
		\draw (-2.83,2.43) node {$1$};
		\draw  (-1,2) circle (1.5pt);
		\draw (-1.29,2.41) node {$2$};
		\draw (-2,1) circle (1.5pt);
		\draw(-2.39,1.07) node {$3$};
		\draw  (-3.01,3.44) circle (1.5pt);
		\draw (-2.75,3.49) node {$4$};
		\draw (-4.63,2) circle (1.5pt);
		\draw (-5,2.07) node {$5$};
		\draw  (-3.03,0.6) circle (1.5pt);
		\draw (-3.07,0.20) node {$6$};
		\draw  (-4.59,0.58) circle (1.5pt);
		\draw (-4.69,0.20) node {$7$};
		\draw  (-2.59,0) circle (1.5pt);
		\draw (-2.23,0.05) node {$8$};
		\draw(-1.43,0) circle (1.5pt);
		\draw (-1.17,0.05) node {$9$};
		\draw (-1.01,3.4) circle (1.5pt);
		\draw(-1.35,3.49) node {$10$};
		\draw (0.67,2) circle (1.5pt);
		\draw (0.95,2.19) node {$11$};
		\draw (-1.01,0.56) circle (1.5pt);
		\draw (-0.71,0.97) node {$12$};
		\draw  (0.61,0.58) circle (1.5pt);
		\draw (1,0.7) node {$13$};
		\end{scriptsize}
		\end{tikzpicture}
		\caption{$G$}
		\end{figure}
	\end{example}
	It follows from \cite[Theorem 4.2]{CarlaR2018} that $\reg(S/J_G) =6$.
We computed the Hilbert series of $S/J_G$ using Macaulay 2 package
\cite{M2}: $$\Hilb_{S/J_G}(t) =\dfrac{1+10t+38t^2+60t^3+19t^4-24t^5}{(1-t)^{18}}.$$
The polynomial $h_{S/J_G}(t)=1+10t+38t^2+60t^3+19t^4-24t^5$ is the  $h$-polynomial of $S/J_G$ and $\deg h_{S/J_G}(t) =5<\reg(S/J_G)$.  
Let $G_1,\ldots,G_k$ be $k$ copies of the graph $G$.  The graph $G^k$ is obtained  by identifying a free vertex of $G_i$ with a free vertex of $G_{i+1}$, 
i.e. $G^k=G_1 \cup \cdots \cup G_k$, $V(G_i) \cap V(G_{j})=\emptyset$, if $j \notin \{i-1,i+1\}$ and  $V(G^i) \cap V(G^{i+1})=\{u_i\}$, where $u_i$ is  a
free vertex of $G^i$ and $G^{i+1}$. Then, it follows from \cite[Theorem 3.1]{JNR} that $\reg(S_k/J_{G^k})=6k$, 
where $S_k =K[x_i,y_i: i \in V(G^k)]$. Also, by \cite[Corollary 3.3]{AR1}, 
$\deg h_{S_k/J_{G^k}}(t) =5k$. This shows that for any positive integer $k$, there is a graph $G^k$ such that $\reg(S_k/J_{G^k})-\deg h_{S_k/J_{G^k}}(t) =k$.

	In \cite{AH18}, de Alba and Hoang asked whether the initial ideal of the binomial edge ideal of a closed graph admits a unique extremal Betti number, (see \cite[Question 1]{AH18}). Recently, in \cite{SMD19}, Saeedi Madani and Kiani gave a negative answer to the above question, (see \cite[Theorem 4.6]{SMD19}). 
	We end this article by asking the following question.
	\begin{question}
		When does the binomial edge ideal of a graph admit a unique extremal Betti number?
	\end{question}
	
\vskip 2mm
\noindent
\textbf{Acknowledgements:} The author is grateful to his advisor A. V. Jayanthan for
his constant support, valuable ideas and suggestions. The  author thanks the National Board
for Higher Mathematics, India for the financial support. The author also wishes to express his sincere gratitude to
the anonymous referees whose comments helped improve the exposition
in great detail.
\bibliographystyle{plain}  
\bibliography{biblo}
\end{document}